\newtheorem{theorem}{Theorem}[section]
\newtheorem{lemma}[theorem]{Lemma}
\theoremstyle{definition}
\newtheorem{definition}[theorem]{Definition}
\newtheorem{remark}[theorem]{Remark}
\newcommand{\R}{\mathbb R}%
\newcommand{\N}{\mathbb N}%
\newcommand{\E}{\mathbb E}%
\numberwithin{equation}{section}
\renewcommand\subsubsection{\@secnumfont}{\bfseries}%
\renewcommand\subsubsection{\@startsection{subsubsection}{3}
  \z@{.5\linespacing\@plus.7\linespacing}{-.5em}%
  {\normalfont\bfseries}}
\begin{document}

\title[Restricted Mean Value Property]{Restricted Mean Value Property on Riemannian manifolds}

\author[K. Biswas]{Kingshook Biswas}
\address{Stat-Math Unit, Indian Statistical Institute, 203 B. T. Rd., Kolkata 700108, India}
\email{kingshook@isical.ac.in}

\author[U. Dewan]{Utsav Dewan}
\address{Stat-Math Unit, Indian Statistical Institute, 203 B. T. Rd., Kolkata 700108, India}
\email{utsav\_r@isical.ac.in}

\subjclass[2020]{Primary 31C12; Secondary 31C05} 

\keywords{Restricted Mean Value Property, Harmonic functions, Harmonic measure, Negatively curved Hadamard manifolds.}

\begin{abstract} 
A well studied classical problem is the harmonicity of functions satisfying the restricted mean-value property (RMVP). While this has so far been studied mainly for domains in $\mathbb{R}^n$, we consider this problem in the general setting of  domains in Riemannian manifolds, and obtain results generalizing classical results of Fenton. We also obtain a result for complete, simply connected Riemannian manifolds of pinched negative curvature where there is no restriction on the radius function in the RMVP.
\end{abstract} 

\maketitle
\tableofcontents

\section{Introduction}
We recall that a continuous function in a domain $D$ in $\R^n$ is harmonic if and only if it satisfies the spherical mean value property (with respect to the surface volume measure) for all spheres contained in $D$. A classical problem which has been considered by various authors is whether harmonicity still holds if one assumes a much weaker version of the mean-value property. Namely, a continuous function on a domain $D$ in $\mathbb{R}^n$ is said to satisfy the {\it restricted mean-value property} (RMVP for short) if for each point $x \in D$, there exists a sphere $S$ of center $x$ and some radius $\rho(x)$, which is contained in the domain $D$, such that $u(x)$ equals the mean value of $u$ on the sphere $S$. One can then ask whether a function satisfying the restricted mean value property in $D$ is harmonic in $D$.

\medskip

For a bounded domain $D$, it turns out that the key factor in an answer to this question is the boundary behaviour of the function. Indeed, if one assumes that the function $u$ extends continuously to the closure of the domain $D$, then  Kellogg gave a simple argument to show that the function must be harmonic (\cite{Kellogg}). Without any assumptions on the boundary behaviour, there are counter-examples, which are unbounded (see page 22, \cite{Littlewood}). In this context, Littlewood asked (in \cite{Littlewood}) for $n=2$, whether the unboundedness is the only obstruction. In other words, if the function satisfying the RMVP is also assumed to be bounded, then is it harmonic? This is the classical `one-circle problem' and in \cite{HN2}, Hansen and Nadirashvili showed that the above problem has a negative answer, that is, there exists a continuous, bounded function on the unit disk in $\R^2$ which satisfies RMVP but is not harmonic. The problem is still open for $n \ge 3$ however. Nevertheless, obtaining sufficient conditions for a function satisfying RMVP to be harmonic has received considerable attention over the years, 
in particular from Fenton and Hansen-Nadirashvili (\cite{Fenton,Fenton2,Fenton3,HN1}).

\medskip

While these authors considered this problem for domains in $\mathbb{R}^n$, in 
the present article we consider this problem in the very general setting of a domain in a Riemannian manifold. The definition of the RMVP in this setting needs some explanation. In a Riemannian manifold, it is not true in general that harmonic functions satisfy the mean-value property with respect to mean-values over geodesic spheres when the mean-value is taken with respect to the surface volume measure induced by the Riemannian metric (although there is a special class of Riemannian manifolds for which this holds, namely the {\it harmonic manifolds}, 
see \cite{Willmore}). The mean-value property does hold however, if one replaces the Riemannian measures on the geodesic spheres by the {\it harmonic measures} on the spheres. These are certain measures on the spheres (or more generally on the boundaries of precompact domains with smooth boundary in the manifold) which are mutually absolutely continuous with respect the Riemannian surface volume measures. For the definition of the harmonic measures we refer to section 2.  

\medskip

The harmonic measures have the following property. For any harmonic function $u$ on a domain $D$ in a Riemannian manifold $M$ and any geodesic ball $B$ such that $\overline{B} \subset D$, one has the following mean value property
\begin{equation} \label{mvp}
u(z) = \int_{\partial B} u(\xi) d\mu_{z,B}(\xi) \:,
\end{equation}  
where $z$ is the center of $B$, and $\mu_{z,B}$ is the harmonic measure on $\partial B$ with respect to $z$. 

\medskip

Thus harmonic functions on a Riemannian manifold do satisfy the mean-value property, if one takes mean-values with respect to harmonic measures. This leads us to the following definition:
\begin{definition} \label{defn_rmvp} {\bf (Restricted Mean-Value Property)}
Let $D$ be a domain in a Riemannian manifold $M$. 
 A continuous function $u$ on $D$ is said to satisfy the Restricted Mean Value Property in $D$ if for all $z \in D$, there exists $0<\rho(z)<inj(z)$ (where $inj(z)$ is the injectivity radius of $z$) such that the closed ball $B = \overline{B(z,\rho(z))}$ is contained in $D$, and the equality (\ref{mvp}) holds.   
\end{definition}

We now let $M$ be a Riemannian manifold, and let $D \subset M$ be any precompact domain in $M$ with smooth boundary, such that the distance to the boundary $d(z,\partial D)$ is smaller than the injectivity radius $inj(z)$, for all points $z$ in $D$. 

\medskip

In this setting we have the following result, which is a vast generalization of Theorem 2 in \cite{Fenton} (\cite{Fenton} only considers the case of the unit disk in $\mathbb{R}^2)$:

\medskip
 
\begin{theorem} \label{domain_bdry_bhv}
If $u: D \subset M \to \R$ is bounded, continuous, satisfies the Restricted Mean Value Property in $D$ and 
\begin{equation} \label{domain_bdry_bhv_eqn} 
\displaystyle\lim_{\substack{z \to \xi\\ z \in D}} u(z) = u_\xi \:\text{exists for almost every }\xi \in \partial D \:,
\end{equation}
with respect to the Riemannian measure on $\partial D$, then  $u$ is harmonic in $D$.
\end{theorem}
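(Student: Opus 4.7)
The plan is to reduce the theorem to a one-sided maximum principle for bounded continuous RMVP functions with almost-everywhere vanishing boundary limits, via a Poisson-integral subtraction. Since $u$ is bounded and its boundary values $u_\xi$ exist for a.e.\ $\xi \in \partial D$ with respect to Riemannian surface measure (hence with respect to any harmonic measure $\omega_z$ on $\partial D$, by the mutual absolute continuity recalled in Section~2), I would form the Dirichlet solution $v(z) := \int_{\partial D} u_\xi \, d\omega_z(\xi)$. Classical Fatou-type results for the Dirichlet problem on precompact smooth domains give $\lim_{z \to \xi} v(z) = u_\xi$ for a.e.\ $\xi \in \partial D$. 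Setting $w := u - v$, the function $w$ is continuous, bounded on $D$, satisfies the RMVP with the same radius function $\rho$ (because $v$, being harmonic, satisfies the full mean value equation on every ball $B(z, \rho(z))$), and has $\lim_{z \to \xi} w(z) = 0$ for a.e.\ $\xi \in \partial D$. The theorem then follows from showing $\sup_D w \leq 0$ for any such $w$, since applying the same claim to $-w$ yields $w \equiv 0$, i.e., $u = v$ is harmonic.

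To prove $\sup_D w \leq 0$, suppose for contradiction $M := \sup_D w > 0$, and consider the level set $E := \{z \in D : w(z) = M\}$. The key structural property is that $E$ is closed under RMVP propagation: if $z^* \in E$, then $M = w(z^*) = \int_{\partial B(z^*, \rho(z^*))} w \, d\mu_{z^*} \leq M$, and equality, combined with the continuity of $w$ and the full support of $\mu_{z^*}$ on the sphere (since it is absolutely continuous with positive density with respect to surface measure), forces $w \equiv M$ on $\partial B(z^*, \rho(z^*))$; thus $\partial B(z^*, \rho(z^*)) \subset E$. A compactness argument then rules out $E$ remaining at positive distance from $\partial D$: if $\delta := \inf_{y \in E} d(y, \partial D) > 0$, the infimum is attained at some $y^* \in E$ (using that $E$ is closed in $D$ and $\overline{D}$ is compact), but $\partial B(y^*, \rho(y^*)) \subset E$ contains points at distance $\delta - \rho(y^*) < \delta$ from $\partial D$ (since $\rho(y^*) > 0$), contradicting the definition of $\delta$. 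The parallel case $E = \emptyset$ is handled via a near-maximum sequence $z_n$ with $w(z_n) \to M$: the RMVP forces a positive $\mu_{z_n}$-proportion of $\partial B(z_n, \rho(z_n))$ to consist of points where $w$ is arbitrarily close to $M$, producing sequences approaching $\partial D$ along which $w$ is close to $M$.

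The main obstacle is to upgrade ``there exist sequences approaching $\partial D$ along which $w$ is close to $M$'' to ``the cluster set of such sequences on $\partial D$ has positive surface measure'': only then does the cluster set meet the full-measure set $A := \{\xi \in \partial D : \lim_{z \to \xi} w(z) = 0\}$, producing the sought contradiction. A priori such a sequence could accumulate only on a null subset of $\partial D$. To defeat this possibility, I would exploit the fact that $E$ does not merely contain isolated points but \emph{entire spheres} $\partial B(y, \rho(y))$ of positive radius. Using the smoothness of $\partial D$ to fix a tubular collar of $\partial D$ on which the nearest-point projection $\pi : D \to \partial D$ is smooth, and iterating the spherical propagation near the boundary, one would show that the nested images $\pi(E \cap \{d(\cdot, \partial D) < \eta\})$ sweep out, for small enough $\eta$, a subset of $\partial D$ of positive surface measure. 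The mutual absolute continuity of harmonic and Riemannian surface measures on $\partial D$ then forces this subset to meet $A$, completing the contradiction.
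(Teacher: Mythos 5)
Your reduction differs fundamentally from the paper's, which does not work with $w := u - P[u_\xi]$ at all; it instead builds a continuous subharmonic envelope $v(z) := \sup_{h \in \mathscr{F}_z} h(z)$ (and a superharmonic $w$ analogously) over harmonic extensions of $u$ on balls interior to $D$, verifies via concentration of harmonic measure and Poisson-integral convergence that this envelope inherits the a.e.\ unrestricted boundary limits of $u$, and then applies an $L^\infty$ maximum principle for genuinely subharmonic functions. Your substitute — a Kellogg-style level-set propagation argument applied directly to the RMVP function $w = u - v$ — is a natural but substantially harder route, and as written it contains two real gaps.

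The first gap is the boundary-limit claim for $v = P[u_\xi]$: Fatou-type theorems give nontangential limits a.e., not unrestricted limits, so you cannot conclude that $w = u - v$ has $\lim_{z \to \xi} w(z) = 0$ for a.e.\ $\xi$ in the unrestricted sense, yet the approach sequences your propagation argument produces are entirely uncontrolled and may be tangential. The second and more serious gap is the step you yourself flag as ``the main obstacle'': the claim that the nearest-point projections $\pi(E \cap \{d(\cdot,\partial D) < \eta\})$ must accumulate on a positive-measure subset of $\partial D$. This is not established — it is described with ``I would exploit'' and ``one would show'' — and there is good reason to doubt it, because nothing bounds $\rho(y)$ from below as $y \to \partial D$: the radius may decay so fast that the spheres $\partial B(y,\rho(y)) \subset E$, and hence all of $E$, stay inside a thin tube around a single normal geodesic, making the cluster set a single boundary point of measure zero. (Indeed, the existence of the Hansen--Nadirashvili counterexample on the disk shows that any argument here must use the boundary hypothesis in a genuinely measure-theoretic way, and your only use of it is exactly this unproven step.) This is precisely the difficulty the Fenton-style envelope construction is designed to avoid: by enlarging $u$ to a \emph{subharmonic} majorant, one gets access to the submean-value property for \emph{every} ball, and the $L^\infty$ maximum principle (Lemma~\ref{max_princ1}, proved via Brownian motion, which ``samples'' the boundary according to harmonic measure and so automatically lands a.e.\ in the good set) replaces any delicate analysis of how a level set can accumulate on $\partial D$.
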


Our next result is a generalization of Theorem 1 from \cite{Fenton}. This result  does not assume boundedness nor any explicit boundary behaviour of $u$, but rather assumes a suitable rate of decay of the radius function $\rho(z)$ as $z$ tends 
to the boundary of $D$ in terms of the modulus of continuity of $u$ on compacts in $D$. In order to state it, we introduce some notation.

\medskip

For a pre-compact domain $D$ define,
\begin{equation} \label{defn_radius}
r_D := \sup \{d(x,\partial D): x \in D\} \:.
\end{equation}
Then for a continuous function $u$ on $D$, given $\varepsilon >0$ and given $r \in (0,1)$, let $\delta(r,\varepsilon;u)$ be the largest positive number such that 
\begin{equation*}
|u(z)-u(y)| \le \varepsilon 
\end{equation*}
holds, provided
\begin{equation*}
d(z,\partial D) \ge (1-r)r_D \text{ and } d(z,y) \le  \delta(r,\varepsilon;u) \:.
\end{equation*}

Define a function $r$ on $D$ by $r(z) = 1-\left(d\left(z,\partial D\right)/ 2\:r_D\right)$ for $z \in D$, and note that for any $z \in D$ we have
$d(z, \partial D) \geq (1 - r(z))r_D$.

We then have:

\begin{theorem} \label{domain_radius}
Let $\kappa \in (0,1), \tau \in (0,1)$ and let $\varepsilon : (0,(1-\tau)r_D] \to (0, \infty)$ be a continuous function such that $\varepsilon(r) \to 0 \text{ as } r \to 0$. Let $u$ be a continuous function on $D$ such that $u$ satisfies the Restricted Mean Value Property at each point $z \in D$ on a geodesic sphere of radius $\rho(z)$, where 
\begin{equation*} 
\rho(z) \le \begin{cases} 
       \kappa \: d(z,\partial D) & \text{ if } d(z,\partial D) \ge (1-\tau)r_D \\
      \min\left\{\kappa \: d(z,\partial D)\:,\delta(r(z),\varepsilon(d(z,\partial D));u)\right\} & \text{ if } d(z,\partial D) < (1-\tau)r_D \:.
   \end{cases}
\end{equation*}
Then $u$ is harmonic in $D$.
\end{theorem}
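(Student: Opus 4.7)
The plan is to deduce Theorem~\ref{domain_radius} from Theorem~\ref{domain_bdry_bhv}, which requires that $u$ be bounded on $D$ and that the full limit $u_\xi=\lim_{z\to\xi,\,z\in D}u(z)$ exist for almost every $\xi\in\partial D$ with respect to the Riemannian surface measure. The central observation, which powers both reductions, is that the hypothesis $\rho(z)\le\delta(r(z),\varepsilon(d(z,\partial D));u)$ combined with $r(z)=1-d(z,\partial D)/(2r_D)$ and the defining property of $\delta$ gives, whenever $d(z,\partial D)<(1-\tau)r_D$, the pointwise oscillation bound
\[
|u(y)-u(z)|\ \le\ \varepsilon(d(z,\partial D)) \qquad \text{for every } y \in \overline{B(z,\rho(z))}.
\]
Thus in the near-boundary region $u$ is nearly constant on each RMVP sphere, with error tending to $0$ as $d(z,\partial D)\to 0$.

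To prove boundedness I would introduce the Markov chain $Z_0=z_0\in D$, $Z_{n+1}\sim \mu_{Z_n,B(Z_n,\rho(Z_n))}$, for which $u(Z_n)$ is a martingale by the RMVP. Because $\rho(z)\le\kappa d(z,\partial D)$ with $\kappa<1$, at each step $d(Z_{n+1},\partial D)\in[(1-\kappa),(1+\kappa)]\,d(Z_n,\partial D)$, so the chain never reaches $\partial D$ in finite time. A compactness argument using the smoothness of $\partial D$ and the non-degeneracy of harmonic measure on a geodesic sphere should show that the chain visits $K_\tau:=\{w\in D:d(w,\partial D)\ge(1-\tau)r_D\}$ in finite expected time with positive probability from every starting point; the oscillation bound above controls the contribution of excursions that linger near $\partial D$. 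Optional stopping, applied to $u(Z_{n\wedge T_{K_\tau}})$ and combined with a telescoped estimate of near-boundary increments, then yields $|u(z_0)|\le \sup_{K_\tau}|u|+C<\infty$ uniformly in $z_0$.

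Once $u$ is bounded, $u(Z_n)$ is a bounded martingale and converges almost surely to some $u_\infty$. The oscillation bound also shows that $u$ is Cauchy along any chain $z_{n+1}\in S(z_n,\rho(z_n))$ with $z_n\to\xi\in\partial D$, with total variation $\sum_n\varepsilon(d(z_n,\partial D))<\infty$. A Vitali-type covering of a boundary neighborhood by RMVP balls, adapted to the modulus $\delta$ and to the Riemannian surface measure on $\partial D$, then promotes chain-wise convergence to a full unrestricted boundary limit at almost every $\xi\in\partial D$, identifying this limit with the pointwise martingale limit $u_\infty$ on $\{\lim Z_n=\xi\}$. Theorem~\ref{domain_bdry_bhv} now applies and $u$ is harmonic in $D$.

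The principal obstacle will be this last step. Since $\rho(z)$ is only an upper bound by $\delta(\cdots)$ and may be arbitrarily small compared to the natural scale $d(z,\partial D)$, the family of RMVP spheres need not generate a covering well-adapted to the surface measure on $\partial D$. Relating the exit distribution of the random walk $\{Z_n\}$ to the Riemannian surface measure and thereby obtaining a genuine a.e.\ boundary limit requires careful use of the smoothness of $\partial D$, of harmonic measure estimates for small geodesic balls, and of the interplay between the equicontinuity modulus $\delta$ and the geometry of $D$.
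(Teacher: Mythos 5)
Your plan to deduce Theorem~\ref{domain_radius} from Theorem~\ref{domain_bdry_bhv} is not the paper's route, and it runs into a fundamental obstruction at the very first step: the function $u$ in Theorem~\ref{domain_radius} need not be bounded. The theorem makes no boundedness assumption, and its conclusion (harmonicity) is perfectly compatible with unboundedness --- for instance, the Poisson kernel with a singularity at a single boundary point of the disk is continuous, harmonic, unbounded, and satisfies the RMVP on spheres of any radius $\rho(z)\le \min\{\kappa\,d(z,\partial D),\,\delta(r(z),\varepsilon(\cdot);u)\}$. So your proposed first reduction --- ``prove $u$ is bounded'' --- is trying to establish something that is false in general, and the Markov chain/optional-stopping argument would have to break down somewhere. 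Concretely, the most visible hole there is that $\rho(z)$ has \emph{no lower bound} (the hypothesis gives only an upper bound), so the chain $Z_n$ can move arbitrarily slowly and nothing forces it to hit $K_\tau$; the claimed ``finite expected time with positive probability'' is unsubstantiated. The a.e.\ boundary limit step suffers from the same lack of a scale: without a lower bound on $\rho$, a Vitali-type covering adapted to these spheres need not exist, which you yourself flag as the principal obstacle.

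The paper instead proves Theorem~\ref{domain_radius} directly, without routing through Theorem~\ref{domain_bdry_bhv} and without any boundedness of $u$. It uses the same auxiliary subharmonic/superharmonic functions $v,w$ from Section~4 (with $\rho_0$ as in (\ref{rho_0})), and the key observation is the one you correctly identified as ``the central observation'': by the definitions of $\delta$ and $r(\cdot)$, whenever $d(z,\partial D)$ is small the optimal ball $B(x,r)$ attaining $v(z)=h(z)$ has both $d(x,\partial D)\le (1-R)r_D$ and $r\le\delta(r(x),\varepsilon(d(x,\partial D));u)$, from which $0\le v(z)-u(z)\le 2\varepsilon(d(x,\partial D))\to 0$ \emph{uniformly} as $d(z,\partial D)\to 0$, and symmetrically $0\le u(z)-w(z)\to 0$ uniformly. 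Hence $\varphi:=v-w$ is a nonnegative continuous subharmonic function that tends to $0$ uniformly at $\partial D$; in particular $\varphi$ \emph{is} bounded even though $u$ need not be, and the $L^\infty$ maximum principle (Lemma~\ref{max_princ1}) applied to $\varphi$ with $f\equiv 0$ gives $\varphi\equiv 0$, so $u=v=w$ is harmonic. The point your proposal misses is that the uniform smallness conclusion should be drawn for the \emph{differences} $v-u$ and $u-w$, not for $u$ itself; that sidesteps both boundedness of $u$ and the existence of a.e.\ boundary limits of $u$, neither of which is available.
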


Both results above follow the general scheme of the arguments in \cite{Fenton}, 
namely constructing subharmonic and superharmonic functions $v$ and $w$ respectively such that $w \leq u \leq v$, and then using the Maximum Principle to show that the nonnegative subharmonic function $v - w$ vanishes identically. 

\medskip

The two main ingredients required to carry out the above scheme are the following:

\medskip

\noindent (1) Concentration of harmonic measures: 

\medskip

This states that if we have a sequence of points $x_n$ in balls $B_n$, such that the balls $B_n$ converge to a ball $B$ and the points $x_n$ converge to a point $\xi$ on the boundary of $B$, then the harmonic measures $\mu_{x_n, B_n}$ on the boundaries $\partial B_n$, with respect to the basepoints $x_n$, converge weakly to the Dirac mass $\delta_{\xi}$ at $\xi$. 

\medskip

Note that if the sequence of balls is a constant sequence, $B_n = B$ for all $n$, then this fact follows immediately from the definition of harmonic measures in terms of solutions to the Dirichlet problem. For a general nonconstant sequence of converging balls this is somewhat non-trivial however. In Euclidean space this concentration of measure is easy to prove using the explicit formula for the Poisson kernels of the balls $B_n$. In a general Riemannian manifold, we are faced with the lack of any explicit formulae for the Poisson kernel of geodesic balls, and instead we use softer techniques, namely estimating harmonic measures by constructing superharmonic barriers.

\medskip

\noindent (2) Convergence of Poisson integrals: 

\medskip

This states that if $u$ is bounded and we have a sequence of balls $B_n$ in $D$ converging to a ball $B$ in $D$, then the Poisson integrals of $u$ on $B_n$ converge pointwise on $B$ to the Poisson integral of $u$ on $B$.

\medskip

Again, in Euclidean space this follows easily from the explicit formula for the Poisson kernel of a finite ball. In the case of a general Riemannian manifold, we prove this by writing the Poisson integral in terms of Brownian motion, and then applying the Dominated Convergence Theorem on the sample space of Brownian motion, i.e. the space of continuous paths in the manifold.  

\medskip

We also have a result for when the boundary is "at infinity". Namely, if we take  $M$ to be a complete, simply connected Riemannian manifold of pinched negative curvature, then it has infinite injectivity radius at each point, and we can consider functions on the whole manifold $M$ satisfying definition \ref{defn_rmvp}, where the domain $D = M$. In the previous two results where only bounded domains were considered, the radius function $z \mapsto \rho(z)$ was bounded. Instead if we take the domain $D$ to be the whole manifold $M$, 
then it is natural to ask for an analogue of Theorem \ref{domain_bdry_bhv} for the whole manifold $M$, where the radius function is allowed to be unbounded. 

\medskip

In fact, we do have a result in this case too, with no restriction at all on the radius function. In this case however, in place of the boundary $\partial D$, one needs to consider the {\it Gromov boundary} $\partial M$ of the Gromov hyperbolic space $M$, and the appropriate measure class on the Gromov boundary in this context is the harmonic measure class (see section 2 for the definitions of Gromov boundary and harmonic measures on the Gromov boundary). 

\medskip

We then have the following:

\medskip

\begin{theorem} \label{neg_bdry_bhv}
Let  $M$ be a complete, simply connected Riemannian manifold of dimension $\ge 2$, satisfying sectional curvature bounds $-b^2 \le K_M \le -a^2$, for some $0<a\le b$. Let $u: M \to \R$ be a bounded, continuous function which satisfies the Restricted Mean Value Property. Suppose 
\begin{equation} \label{neg_bdry_bhv_eqn} 
\displaystyle\lim_{\substack{z \to \xi\\ z \in M}} u(z) = u_\xi\:,\text{exists for almost every }\xi \in \partial M\:,
\end{equation}
with respect to the harmonic measures on $\partial M$. 

Then  $u$ is harmonic in $M$.
\end{theorem}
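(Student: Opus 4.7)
The plan is to follow the sub-/superharmonic envelope scheme sketched in the paper after Theorems \ref{domain_bdry_bhv} and \ref{domain_radius}, adapted to the setting where the ``boundary'' of the domain is the Gromov boundary $\partial M$ at infinity.

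First, I would solve the Dirichlet problem at infinity with boundary data given by a bounded measurable extension $\hat{u}:\partial M \to \R$ of $\xi \mapsto u_\xi$, obtaining the bounded harmonic function
\[
h(z) \;=\; \int_{\partial M} \hat{u}(\xi)\, d\mu_z(\xi).
\]
In pinched negative curvature, the Fatou-type theorem for harmonic measures (Anderson--Schoen, Ancona) gives $\lim_{z \to \xi} h(z) = \hat{u}(\xi)$ for almost every $\xi \in \partial M$. Replacing $u$ by $u - h$, which still satisfies the RMVP since $h$ is harmonic, the problem reduces to showing that a bounded continuous function satisfying the RMVP on $M$ with almost-everywhere vanishing boundary limit on $\partial M$ must be identically zero.

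Next, following the envelope approach sketched in the paper, I would construct a bounded subharmonic function $V$ and a bounded superharmonic function $W$ on $M$ with $W \le u \le V$ and with almost-everywhere vanishing boundary limits on $\partial M$. Concretely, $V$ would arise as an upper envelope of Poisson replacements of $u$ on geodesic balls, and $W$ as the corresponding lower envelope. Verifying that these envelopes are well-defined, bounded, sub-/superharmonic, and inherit the almost-everywhere boundary behavior of $u$ requires both of the main ingredients highlighted in the paper: concentration of harmonic measures under limits of geodesic balls, and pointwise convergence of Poisson integrals (via Brownian motion and dominated convergence on path space).

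Finally, I would apply a maximum principle at infinity to the nonnegative bounded subharmonic function $V - W$. Since $\limsup_{z \to \xi}(V - W)(z) \le 0$ almost everywhere on $\partial M$, the maximum principle for bounded subharmonic functions on a pinched negatively curved manifold---a consequence of the solved Dirichlet problem at infinity and the Fatou-type theorem---forces $V - W \le 0$. Combined with $V - W \ge 0$, this gives $V \equiv W$, which is therefore harmonic; since $W \le u \le V$, we conclude that $u = V = W$ is harmonic on $M$. The main obstacle will be the envelope construction, especially because the radius function $\rho(z)$ in the RMVP is entirely unrestricted in this setting: without an analogue of the bound $\rho(z) \le \kappa\,d(z,\partial D)$ available in Theorems \ref{domain_bdry_bhv} and \ref{domain_radius}, one must control the influence on $V$ and $W$ of RMVP spheres of arbitrarily large radius, which calls for uniform harmonic-measure estimates on large geodesic spheres via Ancona's theory in pinched negative curvature.
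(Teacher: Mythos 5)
Your overall scheme matches the paper's: build a subharmonic upper envelope $V$ and superharmonic lower envelope $W$ with $W \le u \le V$, show both have boundary limit $u_\xi$ almost everywhere on $\partial M$, and apply an $L^\infty$ maximum principle at infinity to conclude $V - W \equiv 0$ and hence $u$ is harmonic. Your initial reduction (subtracting the Poisson integral of the boundary data) is harmless but unnecessary: the paper's maximum principle (Lemma \ref{max_princ2}) is stated directly for arbitrary $L^\infty$ boundary data $f$, so no Fatou-type theorem for $P[f]$ is actually needed, and invoking one introduces an extra dependence you would have to justify (and careful: Fatou theorems typically give nontangential, not unrestricted, limits).

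The genuine gap is in how you propose to handle the unrestricted radius function $\rho(z)$. You correctly identify this as the main new difficulty, but the resolution you gesture at---``uniform harmonic-measure estimates on large geodesic spheres via Ancona's theory''---is not the mechanism that makes the envelope construction work, and on its own it does not explain why the supremum defining $V$ is attained, why $V$ is upper semicontinuous (let alone continuous), or why $V$ inherits the boundary behavior of $u$. The crucial idea in the paper is to enlarge the family $\mathscr{F}_z$ of competing Poisson extensions so that it is closed under Hausdorff limits: since a sequence of geodesic balls with centers and radii tending to infinity converges (in $\overline{M}$) to a horoball or to all of $M$, the family must include Poisson integrals of $u$ over \emph{horoballs} (which requires solving the Dirichlet problem on a horoball via the superharmonic barrier $e^{-\delta d(o,\cdot)}$, and showing via Lemma \ref{no_atom} that the apex has zero harmonic measure) and over $M$ itself. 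Only with this enlarged family do the arguments for attainment of the supremum (Lemma \ref{sup_attained}), continuity and subharmonicity of $V$ (Lemma \ref{cont_and_subh}), and boundary behavior (Lemma \ref{v_bdry_bhv}) go through, since each relies on passing to a subsequence of domains that converges to a domain still in the family. Without this closure property, a maximizing sequence of ball-extensions could escape the family and the envelope argument breaks down. One further small inaccuracy: Theorem \ref{domain_bdry_bhv} does not assume a bound of the form $\rho(z) \le \kappa\, d(z,\partial D)$; that constraint appears only in Theorem \ref{domain_radius}, and in Theorem \ref{domain_bdry_bhv} the only control is $\overline{B(z,\rho(z))} \subset D$ coming from Definition \ref{defn_rmvp}.
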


\medskip

We remark that any Riemannian symmetric space of noncompact type of rank one satisfies the conditions of Theorem \ref{neg_bdry_bhv}.

\medskip

The proof of the above Theorem involves some new ingredients related to the geometry of negatively curved manifolds. In particular, allowing the radius function $\rho$ to be unbounded necessitates an understanding of horoballs and how geodesic balls converge to horoballs. We also need an $L^\infty$ version of the maximum principle for subharmonic functions, concerning boundary values almost everywhere on the Gromov boundary with respect to harmonic measure, which we prove using Brownian motion.

\medskip

The paper is organised as follows. In section $2$, we discuss the relevant preliminaries regarding Gromov hyperbolic spaces, their boundaries, and harmonic measures. In section $3$, we prove the crucial results on concentration of harmonic measures and convergence of Poisson integrals. In section $4$, we construct an auxiliary subharmonic function which will play a central role in the proofs of our main results. In section $5$, we prove some useful $L^\infty$ maximum principles. In section $6$, we give proofs of Theorems \ref{domain_bdry_bhv},
\ref{domain_radius} and  \ref{neg_bdry_bhv}. Finally, we conclude by making some remarks in section $7$. 

\medskip

\section{Preliminaries} 

\medskip

We recall briefly some basic properties of Gromov hyperbolic spaces and CAT(k) spaces, for more details we refer to \cite{Bridson}. 

\medskip

A {\it geodesic} in a metric space $X$ is an isometric embedding $\gamma : I \subset \mathbb{R} \to X$ of an interval into $X$. The metric space $X$ is said to be {\it geodesic} if any two points in $X$ can be joined by a geodesic. A geodesic metric space $X$ is said to be {\it Gromov hyperbolic} if there is a $\delta \geq 0$ such that every geodesic triangle in $X$ is $\delta$-thin, i.e. each side is contained in the $\delta$-neighbourhood of the union of the other two sides.

\medskip

The {\it Gromov boundary} of a Gromov hyperbolic space $X$ is defined to be the set $\partial X$ of equivalence classes of geodesic rays in $X$. Here a geodesic ray is an isometric embedding $\gamma : [0,\infty) \to X$ of a closed half-line into $X$, and two geodesic rays $\gamma_1, \gamma_2$ are said to be equivalent if the set $\{ d(\gamma_1(t), \gamma_2(t)) \ | \ t \geq 0 \}$ is bounded. The 
equivalence class of a geodesic ray $\gamma$ is denoted by $\gamma(\infty) \in \partial X$. 

\medskip

A metric space is said to be {\it proper} if closed and bounded balls in the space 
are compact. Let $X$ be a proper, geodesic, Gromov hyperbolic space. There is a natural topology on $\overline{X} := X \cup \partial X$, called the {\it cone topology} such that $\overline{X}$ is a compact metrizable spacew which is a compactification of $X$. In this case, for every geodesic ray $\gamma$,  $\gamma(t) \to \gamma(\infty) \in \partial X$ as $t \to \infty$, and for any $x \in X, \xi \in \partial X$ there exists a geodesic ray  
$\gamma$ such that $\gamma(0) = x, \gamma(\infty) = \xi$. We will refer to neighbourhoods of $\xi \in \partial X$ with respect to the cone topology as {\it cone neighbourhoods}.

\medskip

For three points $x,y,z \in X$, the {\it Gromov inner product} of $y,z$ 
with respect to $x$ is defined to be   
$$
(y|z)_x := \frac{1}{2}(d(x,y)+d(x,z) - d(y,z)).
$$

If the space $X$ is in addition a CAT(k) space for some $k < 0$ (we refer to \cite{Bridson} for the definition of CAT(k) space), then for any $x \in X$, the Gromov inner product $(.|.)_x : X \times X \to [0,+\infty)$ extends to a continuous function $(.|.)_x : \overline{X} \times \overline{X} \to [0,+\infty]$, such that $(\xi|\eta)_x = +\infty$ if and only if $\xi = \eta \in \partial X$.
This holds in particular if $X$ is a complete, simply connected Riemannian manifold of sectional curvature bounded above by $k$ for some $k < 0$, as it is well known by a Theorem of Alexandrov that such a manifold is a CAT(k) space. In this case the compactification $\overline{X}$ is homeomorphic to the closed unit ball $\overline{\mathbb{B}} \subset \mathbb{R}^n$, and there is a homeomorphism 
$\phi : \overline{\mathbb{B}} \to \overline{X}$ such that the restriction to the open unit ball $\phi : \mathbb{B} \to X$ is a diffeomorphism.

\medskip

We now assume that $X$ is such a negatively curved manifold, i.e. $X$ is complete, simply connected and with sectional curvature bounded above by $-a^2$ for some $a > 0$. The {\it Busemann cocycle} of $X$ is the function $B : X \times X \times \partial X \to \mathbb{R}$ defined by
$$
B(x, y, \xi) := \lim_{z \to \xi} (d(x,z) - d(y,z)).
$$

The limit above exists, and the Busemann cocycle is a continuous function on $X \times X \times \partial X$. Fix a 
basepoint $o \in X$. Then for any $\xi \in \partial X$ and $r \in \mathbb{R}$, the {\it horoball based at $\xi$ of radius $r$} is defined to be the set
$$
H(\xi, r) := \{ x \in X \ | \ B(x,o,\xi) < r \ \}.
$$

The boundary of the horoball in $X$ is given by the set 
$$
\partial H(\xi, r) = \{ x \in X \ | \ B(x, o, \xi) = r \ \}
$$ 
and is called a {\it horosphere based at $\xi$ of radius $r$}. The closures of the horoball and the horosphere in $\overline{X}$ are given by the compact sets $H(\xi, r) \cup \partial H(\xi, r) \cup \{\xi\}$ and $\partial H(\xi, r) \cup \{\xi\}$ respectively. 

\medskip

From the Busemann cocycle, one can also define functions on $X$ called {\it Busemann functions}. For this we fix a basepoint $o \in X$. Then for any $\xi \in \partial X$, we can define a function $B_{\xi} : X \to \mathbb{R}$, called the {\it Busemann function based at $\xi$}, which is given by
$$
B_{\xi}(x) = B(x,o,\xi).
$$

It is well-known that the Busemann functions $B_{\xi}$ are $C^2$, and the gradient of the Busemann function has constant norm equal to one, $||\nabla B_{\xi}|| \equiv 1$. Note that the horospheres based at $\xi$, $\partial H(\xi, r), r \in \mathbb{R}$, are the level sets of the Busemann function $B_{\xi}$. Since the Busemann function is $C^2$ with nowhere vanishing gradient, it follows that the horospheres are $C^2$ submanifolds of $X$.

\medskip

We will be needing some well-known results on the convergence of balls to horoballs and of horoballs to horoballs. For any $x \in X$, let $d_x : X \to \mathbb{R}$ denote the distance function from the point $x$, given by 
$d_x(y) = d(x,y), y \in X$. Then for any $\xi \in \partial X$, if $x_n \in X$ is a sequence such that $x_n \to \xi$ as $n \to \infty$, then the functions $d_{x_n} - d(o,x_n)$ converge in $C^2$ norm on compacts to the Busemann function $B_{\xi}$. Also, if $\xi_n \in \partial X$ is a sequence such that $\xi_n \to \xi$ as $n \to \infty$, then the Busemann functions $B_{\xi_n}$ converge in $C^2$ norm on compacts to the Busemann function $B_{\xi}$.  

\medskip
 
It is well-known that if $x_n \in X$ is a sequence such that $x_n \to \xi \in \partial X$ as $n \to \infty$, and if $r_n > 0$ is a sequence such that 
$r_n - d(x_n, o) \to r \in \mathbb{R}$ as $n \to \infty$, then the closed balls $\overline{B}(x_n, r_n)$ converge to the closure of the horoball $H(\xi, r)$ in $\overline{X}$ with respect to the Hausdorff topology on compact subsets of $\overline{X}$. Similarly, if $\xi_n \in \partial X$ and $r_n \in \mathbb{R}$ are sequences such that $\xi_n \to \xi, r_n \to r$ as $n \to \infty$, then the closures in $\overline{X}$ of the horoballs $H(\xi_n, r_n)$ converge to the closure in $\overline{X}$ of the horoball $H(\xi, r)$, with respect to the Hausdorff topology on compact subsets of $\overline{X}$. 

\medskip 
   
We also need to recall the definition of harmonic measures. These arise from the solution of the Dirichlet problem. Given a precompact domain $D$ with smooth boundary in a Riemannian manifold, for any $x \in D$ the {\it harmonic measure on $\partial D$ with respect to $x$} is the probability measure $\mu_{x, D}$ on $\partial D$ defined by
$$
\int_{\partial D} f \ d\mu_{x, D} = u_f(x)
$$
for all continuous functions $f$ on $\partial D$, where $u_f$ is the solution of the Dirichlet problem in $D$ with boundary value $f$.  The harmonic measures $\mu_{x,D}$ are mutually absolutely continuous, in fact they are absolutely continuous with respect to the Lebesgue measure of $\partial D$. The harmonic measure $\mu_{x, D}$ can also be described in terms of Brownian motion started at $x$. If $(B_t)_{t \geq 0}$ is a Brownian motion started at $x$, and $\tau$ is defined to be the first exit time from the domain $D$, i.e. 
$$
\tau = \inf\{ \ t > 0 \ | \ B(t) \notin D \ \},
$$
then we have
$$
\int_{\partial D} f \ d\mu_{x, D} = \mathbb{E}(f(B_{\tau}))
$$
for all continuous functions $f$ on $\partial D$. 

\medskip

The harmonic measures also allow us to define the {\it Poisson integral} of any $L^{\infty}$ function $f \in L^{\infty}(\partial D)$, which is a bounded harmonic function on $D$ defined by
$$
P[f](x) := \int_{\partial D} f \ d\mu_{x, D} \ , \ x \in D.
$$
 
\medskip

For a complete, simply connected Riemannian manifold $X$ of pinched negative curvature, i.e. with sectional curvature $K$ satisfying $-b^2 \leq K_X \leq -a^2$ for some positive constants $b \geq a > 0$, it is well-known that the Dirichlet problem at infinity is solvable (\cite{AS}, \cite{Sullivan}), and so in this case one can also define a family of harmonic measures $\{ \mu_x \}_{x \in X}$, which are probability measures on $\partial X$ defined by
$$
\int_{\partial X} f \ d\mu_{x} = u_f(x)
$$
for any continuous function $f$ on $\partial X$, where $u_f$ is the solution of the Dirichlet problem at infinity with boundary value $f$, i.e. $u_f$ is harmonic on $X$ and $u_f(x) \to f(\xi)$ as $x \to \xi \in \partial X$, for any $\xi \in \partial X$. As in the case of a bounded domain, the harmonic measures $\mu_x$ are mutually absolutely continuous. As before, in this case also the harmonic measure $\mu_x$ can be described in terms of Brownian motion started at $x$. If $(B(t))_{t \geq 0}$ is a Brownian motion started at $x$, then it is known (\cite{Sullivan}) that almost every sample path of the Brownian motion converges to a (random) point $B_{\infty}$ in $\partial X$. This limiting point $B_{\infty}$ is a random variable taking values in $\partial X$, whose distribution is precisely the harmonic measure $\mu_x$; for any continuous function $f$ on $\partial X$, we have
$$
\int_{\partial D} f \ d\mu_{x, D} = \mathbb{E}(f(B_{\infty})).
$$
As in the case of a precompact domain, in this case too we can define the Poisson integral of any $L^{\infty}$ function on $\partial X$, where the measure class on $\partial X$ is that defined by the harmonic measures. As before, the Poisson integral of a function $f \in L^{\infty}(\partial X)$ is a bounded harmonic function on $X$ defined by
$$
P[f](x) := \int_{\partial X} f \ d\mu_{x} \ , \ x \in X.
$$

\medskip

Finally, it is also possible to define harmonic measures on the boundaries of horoballs. While any horoball $H = H(\xi, r)$ is not precompact in $X$, it is nevertheless precompact in $\overline{X}$, and its boundary in $\overline{X}$ is given by the compactified horosphere $S := \partial H \cup \{\xi\}$, which is a compact subset of $\overline{X}$. The Dirichlet problem for the horoball $H$ can then be stated as follows: given a continuous function $f \in C(S)$, find a continuous extension $u \in C(H \cup S)$ such that $u$ is harmonic in $H$. It is well-known from the Perron method that the Dirichlet problem is solvable if and only if there is a superharmonic barrier at each point of the boundary $S = \partial H \cup \{\xi\}$. For points $z \in \partial H$, the existence of a barrier follows from the fact that $\partial H$ is $C^2$, while for the boundary point $\xi \in \partial X$ of the horoball, one may take as a barrier a function of the form $\phi(x) = \exp(-\delta d(o,x)), x \in X$, for any constant $\delta$ 
such that $0 < \delta < (n-1)a$. Such a function is known to be superharmonic on $X$, extends continuously to $H \cup S$, and has the correct boundary behaviour at $\xi$, namely $\phi(\xi) = 0$ and $\phi(x) > 0$ for all $x \in (H \cup S) \setminus \{\xi\}$. 

\medskip

Thus the Dirichlet problem is solvable for the horoball $H$, and then as before the solution of the Dirichlet problem gives rise to a family of harmonic measures $\mu_{x, H}$ on the boundary $S = \partial H \cup \{\xi\}$ of the horoball in $\overline{X}$. 

\medskip

\section{Concentration of Harmonic measures and convergence of Poisson integrals}
\subsection{Concentration of Harmonic measures}
In this subsection we prove the results on concentration of measures mentioned in the Introduction. We show that the harmonic measures of the complement of a neighborhood of a boundary point of a limiting domain go to zero as the basepoints in a sequence of domains converge to the boundary point. The following is our first result in this direction and uses the concept of local superharmonic barriers.

\medskip

Throughout $M$ denotes a Riemannian manifold.

\medskip
 
\begin{lemma} 
\label{har_meas_domains}
Let $D \subset M$ be a domain with $C^2$ boundary. 
Let $\xi \in \partial D$, and let $U$ be a precompact neighborhood of $\xi$ such that $\overline{D} \cap U$ is $C^2$-diffeomorphic to a closed half-ball in $\R^m$. Let $\{D_n\}_{n=1}^\infty$ be a sequence of precompact domains in $M$ with 
$C^2$ boundary, and let $\{g_n\}_{n=1}^\infty$ be a sequence of $C^2$ local diffeomorphisms from $U$ into $M$, with $U_n:=g_n(U)$ satisfying
\begin{itemize}
\item[(i)] $g_n\left(U \cap \partial D\right) = U_n \cap \partial D_n$ and $g_n\left(U \cap  D\right) = U_n \cap D_n$\:, for all $n \in \N$,
\item[(ii)] $g_n \to id$ in $C^2$ norm on $U$.
\end{itemize}

Then for $y_n \in D_n$ such that $y_n \to \xi$ as $n \to \infty$, we have,
\begin{equation*}
\mu_{y_n,D_n}(\partial D_n \setminus U) \to 0 \text{ as } n \to \infty \:.
\end{equation*} 
\end{lemma}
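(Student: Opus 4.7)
The plan is to construct a strictly superharmonic local barrier $\phi$ for $D$ at $\xi$, push it forward by the maps $g_n$ to obtain superharmonic barriers $\phi_n$ on $D_n \cap U_n$ whose values at $y_n$ vanish in the limit, and then use these via the maximum principle to control $\mu_{y_n, D_n}(\partial D_n \setminus U)$. First I would shrink $U$ to a smaller open neighborhood $V$ of $\xi$ with $\overline{V} \subset U$, still covered by the $C^2$ half-ball chart. Since $g_n \to \mathrm{id}$ uniformly on the compact set $\overline{V}$, setting $V_n := g_n(V)$ gives $\overline{V_n} \subset U$ for $n$ large, hence $\partial D_n \setminus U \subset \partial D_n \setminus V_n$, so it suffices to prove $\mu_{y_n, D_n}(\partial D_n \setminus V_n) \to 0$. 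Using the $C^2$ half-ball model of $\overline{D} \cap U$, one constructs $\phi \in C^2(\overline{D \cap V})$ with $\phi \ge 0$, $\phi(\xi) = 0$, $\phi \ge c_1 > 0$ on $\partial V \cap \overline{D}$, and $\Delta \phi \le -c_0 < 0$ on $D \cap V$ by the standard local barrier construction for a $C^2$ boundary (for instance via the exterior sphere condition, adjusted by a small quadratic term in the boundary-chart coordinates to force strict superharmonicity).

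Next, define $\phi_n := \phi \circ g_n^{-1}$ on $\overline{D_n \cap V_n}$. Since $g_n \to \mathrm{id}$ in $C^2$, the maps $g_n^{-1}$ also converge to $\mathrm{id}$ in $C^2$ on compact subsets of $V$; as the Riemannian Laplacian of a composition is a continuous expression in the first and second derivatives of the composing map, $\Delta \phi_n$ converges to $\Delta \phi$ uniformly on $\overline{D \cap V}$ under the natural identification via $g_n$. For $n$ large this yields $\Delta \phi_n \le -c_0/2 < 0$ on $D_n \cap V_n$, $\phi_n \ge c_1/2$ on $\partial V_n \cap \overline{D_n}$, and $\phi_n(y_n) = \phi(g_n^{-1}(y_n)) \to \phi(\xi) = 0$. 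For such $n$ also $y_n \in V_n$, and any Brownian path starting at $y_n$ that first exits $D_n$ through $\partial D_n \setminus U \subset \partial D_n \setminus \overline{V_n}$ must first leave $V_n$ while still in $D_n$; the strong Markov property therefore yields
\begin{equation*}
\mu_{y_n, D_n}(\partial D_n \setminus U) \le \mu_{y_n, D_n \cap V_n}(\partial V_n \cap \overline{D_n}).
\end{equation*}

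Since $\phi_n$ is continuous on $\overline{D_n \cap V_n}$, superharmonic on $D_n \cap V_n$, and non-negative on $\partial D_n \cap \overline{V_n}$, the maximum principle applied to the harmonic extension of $\phi_n|_{\partial(D_n \cap V_n)}$ gives
\begin{equation*}
\phi_n(y_n) \ge \int_{\partial(D_n \cap V_n)} \phi_n \, d\mu_{y_n, D_n \cap V_n} \ge \frac{c_1}{2} \, \mu_{y_n, D_n \cap V_n}(\partial V_n \cap \overline{D_n}),
\end{equation*}
and combined with $\phi_n(y_n) \to 0$ this forces $\mu_{y_n, D_n \cap V_n}(\partial V_n \cap \overline{D_n}) \to 0$, completing the argument. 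The main technical hurdle is transferring strict superharmonicity from $\phi$ to $\phi_n$ uniformly in $n$ via the $C^2$ perturbation: one must control the Laplacian of $\phi \circ g_n^{-1}$ in the fixed metric on $M$ with enough precision that a strictly negative Laplacian is preserved, which is exactly why the strict margin $c_0 > 0$ is built into the original barrier.
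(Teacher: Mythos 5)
Your proposal is correct and follows essentially the same approach as the paper: a local superharmonic barrier at $\xi$ is transported using the $C^2$-convergence of the maps $g_n$ and then combined with the maximum principle to show the harmonic measure of $\partial D_n\setminus U$ vanishes. The paper works in the straightened half-ball coordinates with a fixed barrier $s(x)=1-e^{\alpha x_1}$ and a cutoff function, extending the barrier to all of $\overline{D_n}$, while you localize to $D_n\cap V_n$ via a first-exit argument and push the barrier forward by $g_n^{-1}$; these are minor expository variants of the same idea.
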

\begin{proof}
Let $W_1,W_2$ be two neighborhoods of $\xi$ such that $\overline{W_2} \subset W_1$ and $\overline{W_1} \subset U$. Let $\varphi \in C^0(M)$ such that $0 \le \varphi \le 1 \:, \:\varphi \equiv 0$ on $W_1$ and $\varphi \equiv 1$ on $M \setminus U$. Then
\begin{equation} 
\label{har_meas_domains_eq1}
\mu_{y_n,D_n}(\partial D_n \setminus U) \le \int_{\partial D_n} \varphi(\zeta) \:d\mu_{y_n,D_n}(\zeta) = u_n(y_n) \:,
\end{equation}
where $u_n$ is the harmonic extension of $\varphi|_{\partial D_n}$ to $D_n$.

Now by the regularity of  $\overline{D} \cap U$, there exists a $C^2$ diffeomorphism
\begin{equation*}
\psi : U  \to V \subset \R^m \:,
\end{equation*}
such that $V$ is an open neighborhood of $0$\:,\: $\psi(\xi)=0\:,$
\begin{eqnarray*}
&\psi(\partial D \cap U)&= \{x \in \R^m : \|x+e_1\|=1\} \cap V \:, (\text{where } e_1=(1,0,\dots,0)) \\
&\psi(\overline{D} \cap U \setminus \{\xi\})& \subset  \{x \in \R^m : \|x+e_1\|\le1\} \cap V \setminus \{0\} \:. 
\end{eqnarray*} 

Let $\xi_n:= g_n(\xi)\in U_n \cap \partial D_n$ for all $n \in \N$. Then consider the $C^2$ diffeomorphisms 
\begin{equation*}
\psi_n : U_n \to V \subset \R^m \:,
\end{equation*}
defined by,
\begin{equation} \label{har_meas_domains_eq2}
\psi_n = \psi \circ g^{-1}_n \:.
\end{equation}

Then we note that  
\begin{eqnarray*}
&\psi_n(\xi_n)&=0\:, \\
&\psi_n(\partial D_n \cap U_n)&= \{x \in \R^m : \|x+e_1\|=1\} \cap V \:, (\text{where } e_1=(1,0,\dots,0)) \\
&\psi_n(\overline{D_n} \cap U_n \setminus \{\xi_n\})& \subset  \{x \in \R^m : \|x+e_1\|\le1\} \cap V \setminus \{0\} \:. 
\end{eqnarray*}

Now as $g_n \to id$ in $C^2$ norm on $U$, we have $\xi_n \to \xi$. Moreover (\ref{har_meas_domains_eq2}) implies that $\psi_n \to \psi$ in  $C^2$ norm and hence, in particular $\left\{{\|\psi_n\|}_{C^2(\overline{D_n} \cap U_n)}\right\}_{n=1}^\infty$ is uniformly bounded.

For $h \in C^2(V)$\:, consider for $n \in \N$ , $L_nh:=\left(\Delta(h \circ \psi_n)\right) \circ \psi^{-1}_n$\:, which is a second order elliptic operator,
\begin{equation*}
L_n = \sum_{i,j} a_{ij,n}\: \frac{\partial^2}{\partial x_i \partial x_j} \:+\: \sum_{i} b_{i,n}\: \frac{\partial}{\partial x_i} \:. 
\end{equation*}

By uniform boundedness of $C^2$ norms of $\psi_n$, we get that for all $n \in \N$, there exists $\lambda_1\:,\:\lambda_2 \:>\:0$ such that
\begin{eqnarray*}
&&\sum_{i,j} a_{ij,n}(x)\: v_iv_j \ge  \lambda_1 {\|v\|}^2 \text{ and } \\
&&|b_{i,n}(x)| \le  \lambda_2 \:, \text{ for all } i\:,\text{ for all } x \in V\cap \psi_n(\overline{D_n} \cap U_n)\:,\: v=(v_1,\dots,v_m) \in \R^m \:.   
\end{eqnarray*}

Let $s(x):= 1 - e^{\alpha x_1} $\: for $\alpha > 0$. Then $s(0)=0$ and 
\begin{equation*}
s(\{x \in \R^m : \|x+e_1\|\le1\}  \setminus \{0\}) \subset (0, \infty) \:.
\end{equation*}

Also for all $x \in V\cap \psi_n(\overline{D_n} \cap U_n)$\:,
\begin{eqnarray*}
L_n\:s(x) &=& - (\alpha^2 \:a_{11,n} + \alpha\: b_{1,n}) e^{\alpha x_1} \\
& \le & -(\lambda_1\: \alpha^2 - \lambda_2\: \alpha )e^{\alpha x_1} \\
& \le & 0 \:,\:\text{ for } \alpha \text{ sufficiently large .}
\end{eqnarray*}

It follows that $s_n:= s \circ \psi_n$ is superharmonic on $D_n \cap U_n$, with $s_n(\xi_n)=0$, for all $n$. Then by choosing $\alpha$ sufficiently large in the definition of $s$ and replacing $s$ by $2s$, we can ensure that 
\begin{equation*}
s_n \ge 1 \:\: \text{on } (\overline{D_n} \cap U_n) \setminus W_2 \:,
\end{equation*}
for large $n$.

We note due to hypothesis $(ii)$, for large $n$, $\overline{W_2} \subset U_n$. Thus we  extend $s_n$ to all of $\overline{D_n}$ by,
\begin{equation*}
\tilde{s_n}(z) = \begin{cases} 
       \min\{1, \:s_n(z)\} & \text{ if } z \in \overline{D_n} \cap W_2 \\
       1 & \text{ if } z \in \overline{D_n} \setminus W_2 \:.
   \end{cases}
\end{equation*}

We note that $\tilde{s_n}$ is superharmonic, non-negative and for $n$ sufficiently large, $\tilde{s_n} \ge \varphi$ on $\partial D_n$. The last fact is seen as follows.

On $\partial D_n \cap W_2$,
\begin{equation*}
\varphi \equiv 0 \le \tilde{s_n} \:,
\end{equation*}
and on $\partial D_n \setminus W_2$,
\begin{equation*}
\varphi \le 1 \equiv \tilde{s_n} \:.
\end{equation*}

Hence by the maximum principle, it follows that for $n$ large,
\begin{equation} \label{har_meas_domains_eq3}
u_n(y_n) \le \tilde{s_n}(y_n) \le s_n(y_n) \:.
\end{equation}

Now as $y_n \to \xi$ and $g_n \to id$ uniformly, it follows that $g^{-1}_n(y_n) \to \xi$. Hence for $n$ large,
\begin{equation}\label{har_meas_domains_eq4}
s_n(y_n) = s(\psi_n(y_n)) = s(\psi(g^{-1}_n(y_n))) \to s(0) = 0 
\end{equation}

Hence, combining (\ref{har_meas_domains_eq1}), \:(\ref{har_meas_domains_eq3}) and (\ref{har_meas_domains_eq4}), it follows that
\begin{equation*}
\mu_{y_n,D_n}(\partial D_n \setminus U) \le  u_n(y_n) \le s_n(y_n) \to 0 \text{ as } n \to \infty \:.
\end{equation*}
\end{proof}

\medskip

Note that the above Lemma applies in particular if we have a sequence of balls 
$B_n = B(x_n, r_n)$ converging to a ball $B = B(x, r)$ (meaning that $x_n \to x$ and $r_n \to r$ as $n \to \infty$) and $y_n \in B_n$ converges to $\xi \in \partial B$, or if we have a sequence of balls $B_n = B(x_n, r_n)$ converging to a horoball $H = H(\xi, r)$ (meaning that $x_n \to \xi$ and $r_n - d(x_n,o) \to r$ 
as $n \to \infty$) and $y_n \in B_n$ converges to $z \in \partial H$.

\medskip

The next result can be viewed as a global analogue of Lemma \ref{har_meas_domains} and uses the concept of global barriers. In the statement of the Lemma below, closures of domains in $M$ are taken in the compactification $\overline{M}$.

\medskip

\begin{lemma} \label{har_meas_compl_cone}
Let  $M$ be a complete, simply connected Riemannian manifold, satisfying sectional curvature bounds $-b^2 \le K_M \le -a^2$, for some $0<a\le b$. Let $W_n$ be a sequence of domains in $M$ where $W_n$ is either a ball, a horoball or the full space $M$ such that $\overline{W_n}$ converges to $\overline{W}$ in the Hausdorff topology on compacts in $\overline{M}$, where $W$ is either a horoball or the full space $M$. 

\medskip

Let $z_n \in W_n$ be a sequence such that $z_n \to \xi$ as $n \to \infty$, for some $\xi \in \overline{W} \cap \partial M$. Then for any cone neighborhood $U$ of $\xi$,
\begin{equation*} 
\mu_{z_n,W_n}(\partial W_n \setminus U) \to 0 \: \text{ as }  n \to \infty\:.
\end{equation*}
\end{lemma}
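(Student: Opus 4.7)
The plan is to reduce the concentration claim to a pointwise vanishing estimate for a Dirichlet extension, and then split into two geometrically distinct cases according to whether $W$ is a horoball or $W = M$. Choose cone neighbourhoods $V \subset \overline V \subset U$ of $\xi$ and a continuous function $\varphi : \overline M \to [0,1]$ with $\varphi \equiv 0$ on $\overline V$ and $\varphi \equiv 1$ on $\overline M \setminus U$ (Urysohn). Taking $\partial W_n$ in $\overline M$ (so that it includes the base point at infinity when $W_n$ is a horoball and equals $\partial M$ when $W_n = M$), and writing $u_n$ for the Dirichlet solution on $W_n$ with boundary values $\varphi|_{\partial W_n}$, one has
\[
\mu_{z_n, W_n}(\partial W_n \setminus U) \le \int_{\partial W_n} \varphi \, d\mu_{z_n, W_n} = u_n(z_n),
\]
so it suffices to prove $u_n(z_n) \to 0$.

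Suppose first that $W$ is a horoball. Then $\overline W \cap \partial M = \{\xi\} \subset V$, so $\overline W \setminus V$ is compact in $M$, and Hausdorff convergence makes $\overline{W_n} \setminus V$ uniformly contained in a compact subset of $M$ for $n$ large; set $R := \sup_{n \text{ large}} \sup_{\overline{W_n} \setminus V} d(o, \cdot) < \infty$. With $0 < \delta < (\dim M - 1)a$, the function $s(x) := e^{\delta R} e^{-\delta d(o,x)}$ is superharmonic on $M$ (the standard global barrier recalled in the preliminaries), satisfies $s \ge 1 \ge \varphi$ off $V$ and $s \ge 0 = \varphi$ on $\overline V$, and at the base point at infinity $\xi_n$ of a horoball $W_n$ (eventually in $V$) both $s(\xi_n)$ and $\varphi(\xi_n)$ vanish, so the maximum principle applies on $W_n$ including this point at infinity. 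It yields $u_n(z_n) \le s(z_n) = e^{\delta(R - d(o, z_n))}$, which tends to $0$ since $z_n \to \xi$ forces $d(o, z_n) \to \infty$.

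Now suppose $W = M$. The exponential barrier now fails, because $\partial W_n \setminus V$ is no longer uniformly bounded in $M$; this is the main obstacle. I would replace it by the Poisson integral at infinity $h := P[\varphi|_{\partial M}]$, which by solvability of the Dirichlet problem at infinity (Anderson--Sullivan) extends continuously to $\overline M$ with $h|_{\partial M} = \varphi|_{\partial M}$. Harmonicity of $h$ on each $W_n$ gives $h(z_n) = \int_{\partial W_n} h \, d\mu_{z_n, W_n}$, whence
\[
u_n(z_n) - h(z_n) = \int_{\partial W_n}(\varphi - h) \, d\mu_{z_n, W_n}.
\]
Since $\varphi - h$ is continuous on $\overline M$ and vanishes identically on $\partial M$, for any $\varepsilon > 0$ there is a compact $K \subset M$ with $|\varphi - h| < \varepsilon$ off $K$. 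The Hausdorff convergence $\overline{W_n} \to \overline M$ forces $W_n$ eventually to contain any compact subset of $M$ (for balls $B(x_n, r_n)$ this uses $r_n - d(o, x_n) \to \infty$; for horoballs $H(\xi_n, r_n)$, $r_n \to \infty$ together with the uniform bound $|B_{\xi_n}(y)| \le d(o,y)$; the case $W_n = M$ is vacuous), so $\partial W_n \cap K = \emptyset$ for $n$ large and the integral above is at most $\varepsilon$ in modulus. Combined with $h(z_n) \to h(\xi) = \varphi(\xi) = 0$, this yields $\limsup u_n(z_n) \le \varepsilon$ for arbitrary $\varepsilon$, and hence $u_n(z_n) \to 0$, completing the argument.
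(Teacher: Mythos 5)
Your proof is correct. In the case where $W$ is a horoball, your argument is essentially identical to the paper's: both construct the global exponential barrier $e^{-\delta d(o,\cdot)}$ (scaled so as to dominate $\varphi$ on the uniformly precompact portion $\overline{W_n}\setminus V$) and invoke the maximum principle on $\overline{W_n}\subset\overline{M}$, which is legitimate including the possible base point at infinity of a horoball $W_n$ since both the barrier and $\varphi$ vanish there.

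Where you genuinely diverge from the paper is the case $W = M$. The paper uses a probabilistic comparison: any Brownian path started at $z_n$ that exits $W_n$ through $\partial W_n\setminus U$ must first cross the finite part $\partial U\cap M$ of $\partial U$, giving the monotonicity $\mu_{z_n,W_n}(\partial W_n\setminus U)\le\mu_{z_n,U}(\partial U\cap M)$; the right-hand side then tends to zero by solvability of the Dirichlet problem for the cone neighborhood $U$ (which the paper establishes by verifying barriers at every point of $\partial U$). You instead bring in the global Poisson integral $h=P[\varphi|_{\partial M}]$, which is continuous on $\overline{M}$ by Anderson--Schoen/Sullivan, and exploit the reproducing formula $h(z_n)=\int_{\partial W_n}h\,d\mu_{z_n,W_n}$ to reduce the claim to estimating $\int_{\partial W_n}(\varphi-h)\,d\mu_{z_n,W_n}$. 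Since $\varphi-h$ is continuous on the compact $\overline{M}$ and vanishes identically on $\partial M$, it is uniformly small off a compact $K\subset M$; and since $\overline{W_n}\to\overline{M}$ forces $W_n$ eventually to swallow $K$ (you correctly verify this for balls, horoballs, and trivially for $W_n=M$), the integral tends to zero, and $h(z_n)\to h(\xi)=\varphi(\xi)=0$ finishes the estimate. Your route trades the paper's Brownian comparison and verification of barriers on $\partial U$ for a cleaner analytic comparison against a single global harmonic majorant; both invoke the Dirichlet problem at infinity, but yours does so only through the existence of $h$, whereas the paper also needs weak convergence of $\mu_{z_n,U}$ to $\delta_\xi$, which it derives from solvability for $U$. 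Your unified treatment also dispenses with the paper's preliminary passage to a subsequence where either all or none of the $W_n$ equal $M$.

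One small stylistic point: when you set $R:=\sup_{n\text{ large}}\sup_{\overline{W_n}\setminus V}d(o,\cdot)$ and assert ``$s\ge 1\ge\varphi$ off $V$,'' this should be understood as a bound on the relevant portion of $\partial W_n$ (namely $\partial W_n\setminus V\subset\overline{W_n}\setminus V$), not on all of $M\setminus V$; the intent is clear but the phrasing is slightly loose.
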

\begin{proof}
Passing to a subsequence we may assume that either all $W_n=M$ or none of them is $M$.

In the former case, the result follows from the solution of the Dirichlet problem on $\partial M$ as the harmonic measures $\mu_{z_n}$ converge weakly to the Dirac mass at $\xi$.

In the latter case, $W_n$ is either a ball or a horoball. Depending on the limiting domain, this case has two subcases:
\begin{itemize}
\item[(i)] the limiting domain is a horoball,
\item[(ii)] the limiting domain is all of $M$. 
\end{itemize}
In case $(i)$ let $V$ be another cone neighborhood of $\xi$ such that $\overline{V} \subset U$. Let $\psi \in C^0(\overline{M})$ be such that $0 \le \psi \le 1\:, \psi \equiv 0$ on $V$ and $\psi \equiv 1$ on $\overline{M} \setminus U$. Then note that
\begin{equation} \label{har_meas_compl_cone_eq1}
\mu_{z_n,W_n}(\partial W_n \setminus U) \le \int_{\partial W_n} \psi(y) \:d\mu_{z_n,W_n}(y) = u_n(z_n) \:,
\end{equation}
where $u_n$ is the harmonic extension of $\psi|_{\partial W_n}$ to $W_n$. Now note that for $n$ large enough, $\partial W_n \setminus V$ is contained in a fixed ball $\tilde{B}$ centered at the origin. 

Fix a $\delta \in (0, (m-1)a)$, where $m$ is the dimension of $M$ and $-a^2$ is the assumed upper bound on the sectional curvature. For such a $\delta$, as shown in \cite{AS}, the function
\begin{equation*}
s(x) := \frac{1}{\varepsilon} e^{-\delta \:d(o,x)} \:, \: x \in M \:,
\end{equation*}
is superharmonic on $M$.

\medskip

Now there exists $\varepsilon > 0$ such that 
\begin{equation*}
e^{-\delta \:d(o,x)} > \varepsilon \:, \text{ for all } x \in \tilde{B} \:.
\end{equation*}

Then note that $s$ is non-negative and $s \ge 1$ on $\tilde{B}$ and hence also on $\partial W_n \setminus V$, for $n$ large enough. Thus for $n$ sufficiently large, $\psi \le s$ on $\partial W_n$. This is seen as follows.

On $\partial W_n \setminus V$
\begin{equation*}
\psi \le 1 \le s \:,
\end{equation*} 
and on $\partial W_n \cap V$
\begin{equation*}
\psi \equiv 0 \le s\:.
\end{equation*} 

Hence by the maximum principle, it follows that for $n$ large enough,
\begin{equation} \label{har_meas_compl_cone_eq2}
u_n(z_n) \le s(z_n) \:. 
\end{equation}

Then combining (\ref{har_meas_compl_cone_eq1}) and (\ref{har_meas_compl_cone_eq2}), it follows that
\begin{equation*}
\mu_{z_n,W_n}(\partial W_n \setminus U) \le u_n(z_n) \le s(z_n) = \frac{1}{\varepsilon} e^{-\delta \:d(o,z_n)} \:\to 0 \text{ as } n \to \infty\:.
\end{equation*}

This completes the proof for the case $(i)$.

\medskip

Now for case $(ii)$ we start by noting that the Dirichlet problem is solvable for a cone neighborhood $U$ such that $\partial U \cap M$ is $C^2$ (we may assume without loss of generality that $U$ satisfies this hypothesis). This is seen as follows. By the well-known Perron method for the solution of the Dirichlet problem, it is enough to show that there exists a barrier at each point of the boundary $\partial U \subset \overline{M}$. For each point $z \in \partial U \cap M$ on the finite part of the boundary, there exists a barrier at $z$ by the $C^2$ regularity of $\partial U \cap M$. For a point $\eta \in \partial U \cap \partial M$ on the part of the boundary of $U$ in $\partial M$, we can use the fact that the Dirichlet problem is solvable for $M$: for a barrier at $\xi$ we take the restriction to $\overline{U}$ of the harmonic extension of a function $\psi \in C^0(\partial M)$ such that $\psi(\eta)=0$ and $\psi(\zeta)>0$ for all $\zeta \in \partial M - \{\eta\}$.

\medskip

Then as the domains $W_n$ converge to all of $M$ and $z_n \to \xi$, we have for large $n$, $z_n \in U$ but $\overline{W_n} \setminus U$ is non-empty. Note that for any Brownian path which starts at $z_n$ and first exits the domain $W_n$ through $\partial W_n \setminus U$, the first exit point of this path from $U$ must lie in the finite part $\partial U \cap M$ of $\partial U$. The probabilistic interpretation of harmonic measures then implies that
\begin{equation} \label{har_meas_compl_cone_eq3}
\mu_{z_n,W_n}\left(\partial W_n \setminus U\right) \le \mu_{z_n,U}\left(\partial U \cap M\right) \:.
\end{equation}
Now the solvability of the Dirichlet problem for $U$ noted earlier implies that
harmonic measures on $U$ with respect to $z_n$ converge weakly to the Dirac mass at $\xi$. This implies
\begin{equation} \label{har_meas_compl_cone_eq4}
\mu_{z_n,U}\left(\partial U \cap M\right) \to 0\:,\text{ as } n \to \infty \:.
\end{equation} 
from which it follows that
\begin{equation*}
\mu_{z_n,W_n}\left(\partial W_n \setminus U\right) \to 0 \:,\text{ as } n \to \infty \:.
\end{equation*}
This completes the proof of the Lemma.
\end{proof}
\subsection{Convergence of Poisson integrals}
In this subsection we show that convergence of domains implies 
convergence of the corresponding Poisson integrals of a function having nice boundary behaviour. We first prove some preliminary results.

\medskip

\begin{lemma} \label{zero_meas_set}
Let $B$ and $D$ be pre-compact domains with smooth boundary such that $B \subset D$. Let $A \subset \partial B \cap \partial D$ be such that $\mu_{x,D}(A)=0$ for all $x \in D$, then $\mu_{x,B}(A)=0$ for all $x \in B$.
\end{lemma}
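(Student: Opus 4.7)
The plan is to use the probabilistic description of harmonic measure recalled in Section~2: for any pre-compact domain $\Omega$ with smooth boundary and any $x \in \Omega$, one has $\mu_{x,\Omega}(E) = \mathbb{P}_x(B_{\tau_\Omega} \in E)$, where $(B_t)_{t \geq 0}$ is Brownian motion started at $x$ and $\tau_\Omega = \inf\{t > 0 : B_t \notin \Omega\}$ is the first exit time from $\Omega$. Fix $x \in B$. Since $B \subset D$ are open, almost every Brownian path starting at $x$ satisfies $B_t \in B \subset D$ for all $t < \tau_B$, and in particular $\tau_B \leq \tau_D$.

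The key observation is that on the event $\{B_{\tau_B} \in A\}$, the exit point from $B$ lies in $A \subset \partial D$, while the path has remained in $D$ throughout $[0,\tau_B)$; hence the first exit from $D$ occurs exactly at $t = \tau_B$, so that $\tau_D = \tau_B$ and $B_{\tau_D} = B_{\tau_B} \in A$ on this event. In other words, as events on the Brownian sample space,
\begin{equation*}
\{B_{\tau_B} \in A\} \subset \{B_{\tau_D} \in A\}.
\end{equation*}
Taking probabilities,
\begin{equation*}
\mu_{x,B}(A) = \mathbb{P}_x(B_{\tau_B} \in A) \leq \mathbb{P}_x(B_{\tau_D} \in A) = \mu_{x,D}(A) = 0,
\end{equation*}
which gives the desired conclusion, as $x \in B$ was arbitrary.

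I do not anticipate any serious obstacle here; the only technical point is the measurability of $A$, which is implicit in the hypothesis $\mu_{x,D}(A) = 0$. If one wishes to be pedantic, one replaces $A$ by a Borel null set $A' \supset A$ for $\mu_{x,D}$ (using outer regularity of the harmonic measure) and applies the displayed containment of events to $A'$ in place of $A$, which makes both sides unambiguously measurable.
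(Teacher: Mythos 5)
Your proof is correct and is essentially identical to the paper's: both use the probabilistic description of harmonic measure and the containment of events $\{B_{\tau_B} \in A\} \subset \{B_{\tau_D} \in A\}$, which follows because a path exiting $B$ through $A \subset \partial B \cap \partial D$ must simultaneously exit $D$ at the same point. Your closing remark on measurability is a harmless extra precaution not present in the paper.
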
 

\medskip

\begin{proof}
Let $x \in B$. If the Brownian motion $(B_t)_{t \ge 0}$ starting at $x$ first exits the domain $B$ via the boundary portion $A$, then in particular it also first exits the domain $D$ via the boundary portion $A$. If $\tau_1$ and $\tau_2$ are the first exit times for the Brownian motion from the domains $B$ and $D$ respectively, then we have
\begin{equation*}
\mu_{x,B}(A) = Pr\{B_{\tau_1} \in A\} \le Pr \{B_{\tau_2} \in A\} = \mu_{x,D}(A)=0 \:.
\end{equation*}  
\end{proof}

\begin{lemma} \label{no_atom}
Let  $M$ be a complete, simply connected Riemannian manifold of dimension $\ge 2$, satisfying sectional curvature bounds $-b^2 \le K_M \le -a^2$, for some $0<a\le b$. Let $H$ be a horoball in $M$ based at $\xi \in \partial M$. Let $\tau$ denote the first exit time of Brownian motion from $H$ Then:

\medskip

\noindent (1) $\tau$ is finite almost surely.

\medskip

\noindent (2) $\{\xi\}$ is not an atom for the harmonic measures on the compactified horosphere $\partial H \cup \{\xi\}$.
\end{lemma}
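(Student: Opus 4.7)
The plan is to exploit the fact that in a pinched negatively curved manifold the Busemann function $B_\xi$ has a strictly positive Laplacian bounded below, which via It\^o's formula produces a deterministic positive drift pushing Brownian motion out of the horoball.

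For (1), I fix a basepoint $o \in M$ so that $H = H(\xi,r) = \{B_\xi < r\}$. The key geometric input is the well-known formula $\Delta B_\xi \ge (m-1)a$ in a Hadamard manifold with $K \le -a^2$, together with $|\nabla B_\xi|\equiv 1$ (comparison with horospheres in constant curvature $-a^2$). Apply It\^o's formula to $Y_t := B_\xi(B_t)$ where $(B_t)$ is Brownian motion started at $x\in H$. Since $|\nabla B_\xi|\equiv 1$, the martingale part has quadratic variation $t$, so by L\'evy's characterization it is a standard one-dimensional Brownian motion $\beta_t$, giving
\[
Y_t \;=\; B_\xi(x) + \beta_t + \tfrac{1}{2}\int_0^t \Delta B_\xi(B_s)\,ds \;\ge\; B_\xi(x) + \beta_t + \tfrac{(m-1)a}{2}\,t .
\]
Because the curvature is also bounded below by $-b^2$, Ricci curvature is bounded below, so Brownian motion on $M$ is stochastically complete and $Y_t$ is defined for all $t\ge 0$. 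The strong law of large numbers for Brownian motion then forces $Y_t \to +\infty$ almost surely, and by continuity of $Y_t$ there is a finite random time at which $Y_t$ crosses the level $r$. This is $\tau$, so $\tau<\infty$ a.s.

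For (2), I use the probabilistic description of the harmonic measure $\mu_{x,H}$ on $\partial H \cup \{\xi\}$ obtained from the Perron solution of the Dirichlet problem on the horoball (recalled at the end of Section 2, with the barrier $\phi(x)=\exp(-\delta d(o,x))$ supplying continuity at $\xi$). For a continuous function $f$ on $\partial H\cup\{\xi\}$, the solution $u_f$ coincides with the expectation of $f$ evaluated at the Brownian exit location: explicitly, on the event $\{\tau<\infty\}$ the path exits through the finite boundary at the point $B_\tau\in\partial H$, and on the event $\{\tau=\infty\}$ the path remains in $H$ forever and, by the standard convergence results for Brownian motion in pinched negative curvature, converges to $\xi$ (indeed it cannot accumulate anywhere in $M$ because $H$ is not compact, and its only possible limit on $\partial M$ from inside $H$ is $\xi$). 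Thus
\[
\mu_{x,H}(\{\xi\}) \;=\; \Pr\bigl\{\tau = \infty\bigr\},
\]
which by part (1) is zero.

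The only delicate point is justifying the identification $\mu_{x,H}(\{\xi\}) = \Pr\{\tau=\infty\}$, since $\xi$ is at infinity rather than a finite boundary point; this is handled by testing against a sequence of continuous cut-off functions $f_n$ on $\partial H\cup\{\xi\}$ supported near $\xi$ and decreasing to the indicator $\mathbf{1}_{\{\xi\}}$, using the Perron solution plus dominated convergence on the sample space of Brownian motion. Everything else is a direct application of the It\^o/drift argument.
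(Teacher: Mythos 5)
Your argument is correct, but for part (1) it takes a genuinely different route from the paper. The paper deduces $\tau<\infty$ a.s.\ by citing Theorem 1.1 of Benoist--Hulin to conclude that the harmonic measure class on $\partial M$ has no atoms, hence the Brownian limit point $B_\infty$ avoids $\xi$ a.s., and since $\overline H\cap\partial M=\{\xi\}$ the path must exit through $\partial H$ in finite time. You instead argue intrinsically: the Laplacian comparison $\Delta B_\xi\ge (m-1)a$ (from $K\le -a^2$), the fact $|\nabla B_\xi|\equiv 1$, and It\^o's formula give the process $Y_t=B_\xi(B_t)$ a one-dimensional Brownian part plus a strictly positive drift, so $Y_t\to+\infty$ a.s.\ and the first hitting time of the level $r$ (which is $\tau$ since $H=\{B_\xi<r\}$) is finite. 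This is self-contained modulo standard facts (Busemann functions are $C^2$ in pinched negative curvature, stochastic completeness from the lower curvature bound), and it avoids the appeal to Benoist--Hulin entirely; the paper's route is shorter as written but imports a sharper theorem about harmonic measures on $\partial M$. For part (2), both proofs are essentially the same -- once $\tau<\infty$ a.s., the exit distribution is concentrated on $\partial H$, so $\mu_{x,H}(\{\xi\})=0$. You are in fact a bit more scrupulous than the paper in flagging that the identification of the Perron solution on the horoball with the Brownian exit law needs a word of justification (the paper invokes ``the probabilistic interpretation'' without comment); your cut-off-and-dominated-convergence sketch is a reasonable way to fill that in, though in either version it rests on the regularity of the horoball boundary established earlier in Section 2.
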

\begin{proof}
(1) Choose and fix an interior point $z \in H$. By taking limits of radii in Theorem $1.1$ of \cite{BH} we get that the harmonic measure on $\partial X$ with respect to $z$ has no atoms. By the probabilistic interpretation of the harmonic measures, it follows that for Brownian motion starting from $z$, the limiting point $B_{\infty}$ lies in $\partial X - \{\xi\}$ with probability $1$. Since $\overline{H} \cap \partial M = \{\xi\}$, this implies that with probability $1$  the Brownian motion starting from $z$ first exits $H$ at some finite boundary point of $H$ (that is points lying on $\partial H$). Thus $\tau < \infty$ almost surely. 

\medskip

(2) The above means that
\begin{equation*}
\mu_{z,H}\left(\partial H \right)=1 \:,
\end{equation*} 
which implies
\begin{equation*}
\mu_{z,H}\left(\{\xi\}\right)=0 \:.
\end{equation*}
As $z$ was an arbitrary interior point, the result follows.
\end{proof}

\medskip

\begin{lemma} \label{conv_stopping_time}
Let $D$ be either of the following:

\medskip

\noindent (1) A precompact domain in a Riemannian manifold $M$, or 

\medskip

\noindent (2) $D = M$, a complete, simply connected Riemannian manifold of pinched negative curvature $-b^2 \le K_M \le -a^2$, for some $0<a \le b$. In this case $\overline{D}$ denotes the compactification $\overline{M}$.

\medskip

Let $W_n \subset D$ be a sequence of domains such that $\overline{W_n} \to \overline{W}$ and $\partial W_n \to \partial W$ in the Hausdorff topology on compacts in $\overline{D}$, for some domain $W \subset D$.

\medskip

Let $\gamma : [0, \infty) \to M$ be a continuous path starting from some $x \in W$. Let $\tau_n, \tau$ be the first exit times of the path $\gamma$ from $W_n,W$ respectively. 

\medskip

Then $\tau_n \to \tau$ as $n \to \infty$. 
\end{lemma}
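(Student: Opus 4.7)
The plan is to decompose the conclusion into the two one-sided bounds $\liminf_n \tau_n \geq \tau$ and $\limsup_n \tau_n \leq \tau$, each deduced from the Hausdorff convergences $\overline{W_n} \to \overline{W}$ and $\partial W_n \to \partial W$ on compacts of $\overline{D}$, together with the continuity of $\gamma$.

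For the lower bound I would argue by contradiction. If some subsequence satisfies $\tau_{n_k} \to t_\ast < \tau$, then $\gamma(t_\ast) \in W$, so one can choose a closed geodesic ball $\overline{B}$ centered at $\gamma(t_\ast)$ and entirely contained in $W$. Since $\overline{B}$ is a compact subset of $\overline{D}$ and $\partial W \cap \overline{B} = \emptyset$, the Hausdorff convergence $\partial W_n \to \partial W$ on $\overline{B}$ forces $\partial W_{n_k} \cap \overline{B} = \emptyset$ for all sufficiently large $k$. But continuity of $\gamma$ yields $\gamma(\tau_{n_k}) \to \gamma(t_\ast)$, so $\gamma(\tau_{n_k}) \in \overline{B}$ eventually; combined with the fact that $\gamma(\tau_{n_k}) \in \partial W_{n_k}$, this contradicts the emptiness just obtained.

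For the upper bound I reduce immediately to the case $\tau < \infty$. Given $\varepsilon > 0$, the strategy is to exhibit a time $t \in [\tau, \tau + \varepsilon)$ at which $\gamma(t)$ lies in the open set $D \setminus \overline{W}$. Once such a $t$ is in hand, pick a compact neighborhood $K$ of $\gamma(t)$ with $K \subset D \setminus \overline{W}$, so that $K \cap \overline{W} = \emptyset$. The Hausdorff convergence $\overline{W_n} \to \overline{W}$ on $K$ then gives $K \cap \overline{W_n} = \emptyset$ for large $n$, hence $\gamma(t) \notin \overline{W_n}$, and therefore $\tau_n \leq t < \tau + \varepsilon$. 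Letting $\varepsilon \to 0$ finishes the upper bound.

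The main obstacle is producing the time $t$ above: for a completely arbitrary continuous $\gamma$, the path could merely graze $\partial W$ at time $\tau$ and subsequently remain in $\overline{W}$ on a right neighborhood of $\tau$, in which case no such $t$ exists. In the intended applications of this lemma to convergence of Poisson integrals, however, $\gamma$ is a Brownian sample path, for which almost surely the path enters the open complement $D \setminus \overline{W}$ in every right neighborhood of the first exit time $\tau$; this is a standard transversality property of Brownian motion against a $C^2$ hypersurface (and, at the boundary at infinity in case (2), against the horospherical boundary pieces treated in Lemma~\ref{no_atom}). Thus $\tau_n \to \tau$ holds almost surely, which is precisely what is needed in order to apply the Dominated Convergence Theorem on the Brownian sample space in the subsequent proofs.
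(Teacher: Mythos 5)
Your decomposition into the two one-sided bounds and your use of Hausdorff convergence of $\partial W_n$ is structurally the same as the paper's argument; the only cosmetic difference is that you prove $\liminf_n \tau_n \geq \tau$ by contradiction, whereas the paper argues directly that $\gamma([0,\tau-\varepsilon]) \subset W_n$ for $n$ large. Both versions of the lower bound are fine.

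More importantly, you have correctly put your finger on a genuine gap in the paper's own proof of the upper bound. The paper asserts that ``by the definition of the first exit time there exists $t$ with $\tau < t < \tau + \varepsilon$ such that $\gamma(t)$ lies outside an open neighbourhood $U$ of $\overline{W}$.'' This does \emph{not} follow from $\tau = \inf\{\,t>0 : \gamma(t) \notin W\,\}$ for an arbitrary continuous path: the path can reach $\partial W$ at time $\tau$ and then linger in $\overline{W}$. A concrete counterexample to the lemma as written is $W = B(0,1) \subset \R^2$, $W_n = B(0,\,1+1/n)$, and $\gamma$ a path starting at the origin that reaches the unit circle at time $1$ and then travels along it forever: here $\tau = 1$ while $\tau_n = +\infty$ for all $n$, even though $\overline{W_n} \to \overline{W}$ and $\partial W_n \to \partial W$. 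Your diagnosis that the statement must be read as an almost-sure statement along Brownian sample paths is the correct fix, and the a.s.\ version is all that is actually used in Lemma~\ref{poisson_conv_1}. In the applications $W$ is a geodesic ball or a horoball, so $\partial W$ is a $C^2$ hypersurface; by the strong Markov property and the standard transversality of Brownian motion against a $C^2$ hypersurface (combined with Lemma~\ref{no_atom}, which ensures a.s.\ finite exit time from a horoball with exit point on the smooth part $\partial H$), almost every Brownian path enters $D \setminus \overline{W}$ in every right neighbourhood of $\tau$, which is precisely the missing ingredient. It would be cleanest to state the lemma with the extra hypothesis that $\gamma$ exits $\overline{W}$ in every right neighbourhood of $\tau$, and to note separately that Brownian paths satisfy this almost surely.
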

\begin{proof}

Note that the hypothesis of the Lemma implies that $x \in W_n$ for all $n$ large enough, we only consider such $n$. We consider two cases:

\medskip

\noindent (1) $\tau < +\infty$: 

\medskip
 
In this case, given $\epsilon > 0$, by the definition of the first exit time there exists $t>0$ with $\tau < t < \tau(\gamma) + \varepsilon$ such that $\gamma(t)$ lies outside an open neighbourhood $U$ of $\overline{W}$. Then by the Hausdorff convergence hypothesis, we have $\overline{W_n} \subset U$ for all $n$ large enough, thus $\gamma(t)$ lies outside of $\overline{W_n}$, and so $\tau_n \leq t$. Thus for $n$ large,
\begin{equation*}
\tau_n \le t < \tau + \varepsilon \:.
\end{equation*}

\medskip

On the other hand by definition of the first exit time, the portion of the path $\gamma([0, \tau - \varepsilon])$ is contained in $W - V$ for some open neighbourhood $V$ of $\partial W$. By the Hausdorff convergence of $\partial W_n$ to $\partial W$, for $n$ large enough we have $\partial W_n \subset V$, thus $\gamma([0, \tau - \varepsilon])$ is contained in $W_n$, hence 
\begin{equation*}
\tau_n > \tau - \epsilon \:.
\end{equation*}
This completes the proof in the case $\tau < +\infty$.

\medskip

\noindent (2) $\tau = +\infty$:

\medskip

In this case, given $T > 0$, since $\tau = +\infty$ there exists a neighbourhood $V$ of $\partial W$ such that $\gamma([0,T]) \subset W - V$. Now as before, the Hausdorff convergence implies that for all $n$ large enough we have $\partial W_n \subset V$, hence $\gamma([0,T]) \subset W_n$ and so $\tau_n \geq T$. It follows that $\tau_n \to +\infty = \tau$ as $n \to \infty$. 

%
%
\end{proof}

\begin{lemma}
\label{poisson_conv_1}
Let $D$ be either of the following:

\medskip

\noindent (1) A precompact domain in a Riemannian manifold $M$, or 

\medskip

\noindent (2) $D = M$, a complete, simply connected Riemannian manifold of pinched negative curvature $-b^2 \le K_M \le -a^2$, for some $0<a \le b$. In this case $\overline{D}$ denotes the compactification $\overline{M}$.

\medskip

Let $W_n \subset D$ be a sequence of domains such that
\begin{itemize}
\item $\partial W_n \cap \partial D = \emptyset$ \:,
\item $\overline{W_n} \to \overline{W}$ and $\partial W_n \to \partial W$ in the Hausdorff topology on compacts in $\overline{D}$, for some domain $W \subset D$.
\end{itemize}

\medskip

Moreover, in case $(1)$ we assume that $W_n$ and $W$ are balls, while in case $(2)$ we assume that $W_n$ are balls and $W$ is either a ball, a horoball or all of $M$.

\medskip

Let $u$ be bounded, continuous on $D$ and satisfy 
\begin{equation} \label{poisson_conv_1_bdry}
\displaystyle\lim_{\substack{z \to \xi\\ z \in D}} u(z) = f(\xi) \:\text{exists for almost every }\xi \in \partial D \:,
\end{equation}
with respect to the harmonic measures on $\partial D$, where $f$ is an $L^{\infty}$ function on $\partial D$. 

\medskip

Then the Poisson integrals of $u$ on $W_n$ converge pointwise on $W$ to the Poisson integral of $u$ on $W$.  
\end{lemma}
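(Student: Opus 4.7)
The plan is to realize each Poisson integral as an expectation over the sample-path space of a Brownian motion and then to pass to the limit via the Dominated Convergence Theorem. Fix $x \in W$. By the Hausdorff convergence hypothesis, $x \in W_n$ for all sufficiently large $n$, and I restrict to such $n$. Run a Brownian motion $(B_t)_{t \geq 0}$ starting at $x$ on a probability space $(\Omega, \mathbb{P})$, and let $\tau_n, \tau$ denote the first exit times of the sample path from $W_n, W$ respectively. Since each $W_n$ is a ball, $\tau_n < \infty$ almost surely, and the probabilistic description of harmonic measures recalled in Section 2 gives
\[
P_{W_n}[u](x) = \mathbb{E}\bigl[u(B_{\tau_n})\bigr].
\]
By Lemma \ref{conv_stopping_time}, $\tau_n \to \tau$ pointwise on $\Omega$, which is the single input from the previous subsection I would rely on.

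The core step is to verify almost-sure pointwise convergence of the integrand $u(B_{\tau_n})$, and I would split according to the nature of the limiting domain $W$. If $W$ is a ball whose closure lies in the interior of $D$, then $\tau < \infty$ almost surely, continuity of the sample path gives $B_{\tau_n} \to B_\tau \in \partial W$, and continuity of $u$ at $B_\tau$ yields $u(B_{\tau_n}) \to u(B_\tau)$. If $W$ is a horoball $H$ based at $\xi$, Lemma \ref{no_atom}(1) gives $\tau < \infty$ almost surely, while part (2) ensures the exit point is not $\xi$, so $B_\tau \in \partial H \subset M$ and the same continuity argument applies. If $W = M$, then $\tau_n \to \tau = +\infty$ almost surely; I would then invoke the Sullivan theory recalled in Section 2 to say that the sample path converges to a random point $B_\infty \in \partial M$ whose distribution is the harmonic measure $\mu_x$, whence $B_{\tau_n} \to B_\infty$, and hypothesis (\ref{poisson_conv_1_bdry}), applied along $\mu_x$-almost every endpoint, yields $u(B_{\tau_n}) \to f(B_\infty)$.

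In any case in which $B_\tau$ can in fact land on $\partial D$ (for instance in case (1) if $\overline{W}$ happens to touch $\partial D$), I would combine Lemma \ref{zero_meas_set} with the boundary-limit hypothesis (\ref{poisson_conv_1_bdry}) to conclude that the exceptional set where the limit of $u$ fails is $\mu_{x,W}$-negligible, so on its complement the boundary limit $f(B_\tau)$ exists and $u(B_{\tau_n}) \to f(B_\tau)$. In every case, the bounded function $u$ supplies a uniform dominating constant, so the Dominated Convergence Theorem on $\Omega$ yields
\[
P_{W_n}[u](x) \longrightarrow \mathbb{E}[\Phi] = P_W[u](x),
\]
with $\Phi = u(B_\tau)$ in the finite cases and $\Phi = f(B_\infty)$ when $W = M$, these expectations being exactly the respective Poisson integrals.

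The main obstacle I expect is case (2), where the finite boundaries $\partial W_n$ sit inside $M$ but the limiting exit locus may partially or wholly lie on the ideal boundary $\partial M$. The horoball subcase requires ruling out exit at the ideal base point $\xi$ via Lemma \ref{no_atom}, and the subcase $W = M$ requires bridging from interior exit points $B_{\tau_n} \in \partial W_n \subset M$ to the Brownian limit $B_\infty \in \partial M$ and then exploiting the almost-everywhere boundary hypothesis with respect to harmonic measure on $\partial M$. This is the place where the transition from the classical Dirichlet problem on a finite domain to the Dirichlet problem at infinity has to be handled carefully, and it is the only step that genuinely uses the pinched negative curvature and the harmonic-measure class on $\partial M$.
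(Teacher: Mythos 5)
Your proposal is correct and takes essentially the same route as the paper: both fix a point $x \in W$, use the Brownian-motion representation of the Poisson integrals, invoke Lemma~\ref{conv_stopping_time} to get $\tau_n \to \tau$ on almost every sample path, deduce $B_{\tau_n} \to B_\tau$ (or $B_\infty$ when $W = M$), handle the almost-everywhere boundary values via Lemma~\ref{zero_meas_set} in case (1) and Lemma~\ref{no_atom} (plus Sullivan's theorem for $W=M$) in case (2), and conclude by Dominated Convergence. The only cosmetic difference is in how the cases are grouped — the paper organizes case (1) by whether $\partial W \cap \partial D$ has zero or positive harmonic measure, while you organize by the geometric type of $W$ — but the arguments within each case are the same.
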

\begin{proof}
First let us assume that $D$ is as in $(1)$ that is a bounded, pre-compact domain. Then $W$ is either of the following:
\begin{itemize}
\item[(i)] $W$ is a ball in $D$ such that $\partial W \cap \partial D$ has zero harmonic measure,
\item[(ii)] $W$ is a ball in $D$ such that $\partial W \cap \partial D$ has positive harmonic measure.
\end{itemize}

\medskip

In both the cases $(i)$ and $(ii)$, by continuity, Lemma \ref{zero_meas_set} and boundary behaviour (\ref{poisson_conv_1_bdry}), it follows that boundary limits of $u$ exist for almost all points on $\partial W$ (with respect to the Riemannian measure of $\partial W$) and hence $u$ defines an $L^\infty$ function on $\partial W$. Hence the Poisson integral of $u$ on $W$ is well-defined.

\medskip

We choose and fix $x \in W$. By convergence of domains, $x \in W_n$ for all $n$ sufficiently large. We consider the Brownian motion ${(B_t)}_{t \ge 0}$ starting at $x$. By convergence of first exit times in Lemma \ref{conv_stopping_time}, we have
\begin{equation*}
B_{\tau_n} \to B_\tau \: \text{as } n \to \infty \:,
\end{equation*} 
on almost all Brownian paths, where $\tau_n,\tau$ are the first exit times of the Brownian motion (starting at $x$) from $W_n,W$ respectively. Let $A$ be the full harmonic measure set in $\partial W$ where boundary limits of $u$ exist. Now by the probabilistic interpretation of harmonic measures, almost all Brownian paths starting at $x$ land in $A$. Consequently it follows that
\begin{equation*}
u(B_{\tau_n}) \to u(B_\tau) \: \text{ almost surely as } n \to \infty \:.
\end{equation*}

\medskip

Then by boundedness of $u$, the Dominated Convergence Theorem is applicable on the probability space of Brownian paths starting at $x$ and it yields,
\begin{equation*}
\E(u(B_{\tau_n})) \to \E(u(B_{\tau})) \: \text{as } n \to \infty \:,
\end{equation*}
that is,
\begin{equation*}
\int_{\partial W_n} u(\xi) \: d\mu_{x,W_n}(\xi) \to \int_{\partial W} u(\xi) \: d\mu_{x,W}(\xi) \:,
\end{equation*}
which gives the result in this case.

\medskip

Now for the case when $D$ is as in $(2)$, $W$ is one of the following:
\begin{itemize}
\item[(i)] a ball,
\item[(ii)] a horoball,
\item[(iii)] all of $M$.
\end{itemize} 

\medskip

In case (ii) it follows from Lemma \ref{no_atom} that $u$ is defined almost everywhere with respect to the harmonic measure on $\partial W$, and so the Poisson integral of $u$ on $W$ is well-defined, while in case (iii) we take the Poisson integral of $u$ to be the Poisson integral of the boundary values on $\partial M$ given by (\ref{poisson_conv_1_bdry}), which are defined almost everywhere with respect to harmonic measure on $\partial M$. Thus in all cases the Poisson integral of $u$ on $W$ is well-defined.

\medskip 

Now for cases $(i)$ and $(ii)$, by Lemma \ref{no_atom}, the first exit times are finite almost surely and hence arguments similar to that in the case of bounded domains yield the result.

\medskip

Finally, for case $(iii)$, we choose and fix $x \in M$. Then $x \in W_n$, for $n$ sufficiently large. By Lemma \ref{conv_stopping_time}, we have $\tau_n \to \infty$ almost surely as $n \to \infty$. Recalling from Sullivan's result \cite{Sullivan} that the Brownian motion $B_t$ converges almost surely as $t$ tends to infinity to a random variable $B_{\infty}$ taking values in $\partial M$, we have
\begin{equation*}
B_{\tau_n} \to B_\infty\:\text{ almost surely as } n \to \infty. \:
\end{equation*}

\medskip

Now the boundary limits of $u$ exist for all points in a subset $A \subset \partial M$ of full harmonic measure. Since the harmonic measure on $\partial M$ is the distribution of $B_{\infty}$, we have $B_{\infty} \in A$ almost surely. It follows that
\begin{equation*}
u(B_{\tau_n}) \to f(B_{\infty}) \:\text{ almost surely as } n \to \infty \:
\end{equation*}

\medskip

Then by the Dominated Convergence Theorem on the probability space of paths, we get
\begin{equation*}
\int_{\partial W_n} u(\xi) d\mu_{x,W_n}(\xi) = \E(u(B_{\tau_n})) \to \E(f(B_\infty))= \int_{\partial M} f(\xi) d\mu_x(\xi) \:.
\end{equation*}
\end{proof}

\medskip

We now apply Lemma \ref{poisson_conv_1} to obtain the following more general version of convergence of Poisson integrals:

\medskip

\begin{lemma}
\label{poisson_conv_2}
Let $D$ be either of the following:

\medskip

\noindent (1) A precompact domain in a Riemannian manifold $M$, or 

\medskip

\noindent (2) $D = M$, a complete, simply connected Riemannian manifold of pinched negative curvature $-b^2 \le K_M \le -a^2$, for some $0<a \le b$. In this case $\overline{D}$ denotes the compactification $\overline{M}$.

\medskip

Let $W_n \subset D$ be a sequence of domains such that $\overline{W_n} \to \overline{W}$ and $\partial W_n \to \partial W$ in the Hausdorff topology on compacts in $\overline{D}$, for some domain $W \subset D$.

\medskip

Moreover we assume that in case $(1)$ $W_n$ and $W$ are balls, while in case $(2)$ $W_n$  and $W$ are either balls, horoballs or all of $M$.

\medskip

Let $u$ be bounded, continuous on $D$ and satisfy
\begin{equation*} 
\displaystyle\lim_{\substack{z \to \xi\\ z \in D}} u(z) = f(\xi) \:\text{exists for almost every }\xi \in \partial D \:,
\end{equation*}
with respect to the harmonic measures on $\partial D$. 

\medskip

Then the Poisson integrals of $u$ on $W_n$ converge pointwise on $W$ to the Poisson integral of $u$ on $W$. 

\medskip

In fact the convergence is uniform on compacts in $W$. 
\end{lemma}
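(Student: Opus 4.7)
The plan is to reduce to Lemma \ref{poisson_conv_1} via the Brownian motion representation of the Poisson integral together with the convergence of first exit times from Lemma \ref{conv_stopping_time}, combined with an extension argument that handles the newly permitted cases where $\partial W_n \cap \partial D$ may be nonempty and where $W$ or $W_n$ is a horoball or all of $M$.

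First I would define a bounded Borel extension $\bar{u}$ of $u$ to $\overline{D}$ by setting $\bar{u} = u$ on $D$, $\bar{u}(\xi) = f(\xi)$ at those $\xi \in \partial D$ where the boundary limit exists, and $\bar{u} = 0$ on the remaining null set $N \subset \partial D$. Then $\bar{u}$ is continuous at every point of $\overline{D} \setminus N$, and for any admissible sub-domain $V \subset D$ the Poisson integral of $u$ on $V$ may be written
\[
P_V[u](y) \;=\; \int_{\partial V} \bar{u} \, d\mu_{y,V} \;=\; \E\bigl(\bar{u}(B_{\sigma_V})\bigr),
\]
where $\sigma_V$ is the first exit time from $V$ of Brownian motion started at $y$, and in the case $V = M$ we interpret $B_{\sigma_V}$ as $B_\infty$ and $\bar{u}(B_\infty)$ as $f(B_\infty)$. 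Well-definedness of this integral in the horoball case follows from Lemma \ref{no_atom}, and in the $V = M$ case from the boundary hypothesis.

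Next, I would fix $x \in W$, so that $x \in W_n$ for $n$ sufficiently large, and let $\tau_n, \tau$ denote the first exit times of Brownian motion started at $x$ from $W_n$ and $W$. Lemma \ref{conv_stopping_time} yields $\tau_n \to \tau$ almost surely, hence by continuity of paths $B_{\tau_n} \to B_\tau$ almost surely on $\{\tau < +\infty\}$, while on $\{\tau = +\infty\}$ (possible only if $W = M$) one has $B_{\tau_n} \to B_\infty$ almost surely by Sullivan's convergence theorem. The crucial step is to show $B_\tau \notin N$ almost surely, so that the continuity of $\bar{u}$ outside $N$ gives $\bar{u}(B_{\tau_n}) \to \bar{u}(B_\tau)$ almost surely; the boundedness of $\bar{u}$ and the bounded convergence theorem on path space then yield pointwise convergence of the Poisson integrals. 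In case (1), the required estimate $\mu_{x,W}(N) = 0$ is exactly Lemma \ref{zero_meas_set}, whose Brownian-motion proof transfers verbatim; in case (2), the same path-wise argument handles the ball and horoball cases (the horoball case using Lemma \ref{no_atom} to rule out almost-sure landing at the cusp $\{\xi\}$), and when $W = M$ the hypothesis reads precisely $\mu_x(N) = 0$. Applying the same reasoning with $W_n$ in place of $W$ also ensures that each $P_n[u](x)$ is well-defined.

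Finally, I would upgrade pointwise convergence to uniform convergence on compacts by a standard equicontinuity argument. The Poisson integrals $P_n[u]$ are harmonic on $W_n$ and uniformly bounded by $\|u\|_\infty$. Given a compact set $K \subset W$, choose $\delta > 0$ with the closed $\delta$-neighbourhood $K_\delta \subset W$; the Hausdorff convergence $\overline{W_n} \to \overline{W}$ forces $K_\delta \subset W_n$ for $n$ large. Interior gradient estimates for harmonic functions then give a uniform bound on $\|\nabla P_n[u]\|_{L^\infty(K)}$, so $\{P_n[u]\}$ is equicontinuous on $K$; combining this with the pointwise convergence already established, Arzel\`a--Ascoli yields uniform convergence on $K$. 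The main technical obstacle is verifying the almost-sure avoidance $B_\tau \notin N$ when $W$ is a horoball in the pinched negative curvature setting, which requires blending the horoball-specific no-atom property of Lemma \ref{no_atom} with the Brownian-motion proof of Lemma \ref{zero_meas_set}.
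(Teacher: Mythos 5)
Your proposal takes a genuinely different route from the paper. The paper proves this lemma by a diagonal reduction to Lemma~\ref{poisson_conv_1}: for each $W_n$ it chooses an auxiliary ball $V_n$ with $\partial V_n \cap \partial D = \emptyset$ so close to $W_n$ that $|P_{V_n}[u](x) - P_{W_n}[u](x)| < 1/n$ (this single-step approximation is itself an application of Lemma~\ref{poisson_conv_1}, with a constant limiting domain), and then applies Lemma~\ref{poisson_conv_1} to the sequence $V_n \to W$. That reduction is designed precisely so that all exit points lie in $D$, where $u$ is continuous, and the only subtle boundary behaviour that ever arises is at the single limiting exit point $B_\tau$. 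Your proposal instead redoes the Brownian-motion argument of Lemma~\ref{poisson_conv_1} directly for the more general hypotheses (dropping $\partial W_n \cap \partial D = \emptyset$ and allowing $W_n$ to be horoballs or all of $M$), which is a legitimate alternative strategy.

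However, there is a gap in the step where you assert that ``the continuity of $\bar u$ outside $N$ gives $\bar u(B_{\tau_n}) \to \bar u(B_\tau)$ almost surely.'' As you have defined it, $\bar u$ is \emph{not} automatically continuous at a point $\xi \in \partial D \setminus N$ in the topology of $\overline D$; the boundary hypothesis only gives continuity along sequences approaching $\xi$ from inside $D$. Since $\partial W_n$ may meet $\partial D$, the exit points $B_{\tau_n}$ may lie on $\partial D$, and then convergence of $\bar u(B_{\tau_n}) = f(B_{\tau_n})$ to $f(B_\tau)$ is not immediate, because $f$ is a priori only an $L^\infty$ function. To close this gap you need one of two additional observations: either (a) prove that the restriction of $\bar u$ to $\overline D \setminus N$ is in fact continuous --- which holds, but requires its own small diagonal argument showing that $f$ is continuous as a function on the subspace $\partial D \setminus N$ --- together with the fact that almost surely every $B_{\tau_n}$ and $B_\tau$ avoid $N$; or (b) observe that since the path lies in $W_n \subset D$ before time $\tau_n$, the event $B_{\tau_n} \in \partial D$ forces $\tau_n$ to equal the first exit time $\sigma_D$ from $D$, so by $\tau_n \to \tau$ either this happens only finitely often (so eventually $B_{\tau_n} \in D$ and continuity from inside $D$ applies) or it happens infinitely often, in which case $\tau = \sigma_D$ and those $B_{\tau_n}$ all coincide with $B_\tau$, making the convergence trivial for those terms. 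Either fix is elementary, but neither is stated, and as written the claim about continuity of $\bar u$ does not follow. The paper's diagonal reduction to Lemma~\ref{poisson_conv_1} is arranged specifically to sidestep this: it only ever applies the Brownian argument in the setting $\partial W_n \cap \partial D = \emptyset$, so the exit points never touch $\partial D$.

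The remainder of your argument (the well-definedness of the Poisson integrals on horoballs via Lemma~\ref{no_atom}, the treatment of $W = M$ via Sullivan's theorem, and the upgrade to uniform convergence on compacts via interior estimates and Arzel\`a--Ascoli) matches the paper's reasoning and is sound.
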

\begin{proof}
We choose and fix $x \in W$. By convergence of domains, $x \in W_n$ for all $n$ sufficiently large. For any such $n$, we can choose a ball $V_n \subset D$ such that 
\begin{itemize}
\item $\partial V_n \cap \partial D = \emptyset$\:,
\item $x \in V_n$\:,
\item Hausdorff distances between the closures $\overline{V_n}$ and $\overline{W_n}$, as well as the boundaries $\partial V_n$ and $\partial W_n$ are sufficiently small so that by Lemma \ref{poisson_conv_1}, we have
\begin{equation} \label{poisson_conv_2_eq1}
\left|P_{V_n}[u](x)-P_{W_n}[u](x)\right| < \frac{1}{n} \:,
\end{equation}
where $P_{V_n}[u]$ and $P_{W_n}[u]$ are the Poisson integrals of $u$ on $V_n$ and $W_n$ respectively, and
\item $\overline{V_n} \to \overline{W}$ and $\partial V_n \to \partial W$ in the Hausdorff topology on compacts in $\overline{D}$.
\end{itemize}

\medskip

Then Lemma \ref{poisson_conv_1} implies that given $\varepsilon >0$, for $n$ sufficiently large, we have
\begin{equation} \label{poisson_conv_2_eq2}
\left|P_{V_n}[u](x)-P_{W}[u](x)\right| < \frac{\varepsilon}{2} \:,
\end{equation}
where $P_{V_n}[u]$ and $P_W [u]$ are the Poisson integrals of $u$ on $V_n$ and $W$ respectively. Then by (\ref{poisson_conv_2_eq1}) and (\ref{poisson_conv_2_eq2}), for $n$ large we have
\begin{equation*} 
\left|P_{W_n}[u](x)-P_{W}[u](x)\right| < \varepsilon \:.
\end{equation*}

\medskip

This proves the pointwise convergence of the Poisson integrals. The uniform convergence on compacts of the Poisson integrals follows from this pointwise convergence and from the fact that the Poisson integrals are uniformly bounded harmonic functions. The uniform boundedness implies by classical results that the Poisson integrals are equicontinuous on compacts in $W$, and the pointwise convergence implies the uniqueness of the limit of any subsequence which converges uniformly on compacts, hence the whole sequence converges uniformly on compacts.
\end{proof}

\section{Construction of an auxiliary subharmonic function}
For $z \in D$, under the hypothesis of Theorem \ref{domain_bdry_bhv}, we define $\rho_0(z)=d(z, \partial D)$ and let $\mathscr{F}_z$ be the collection of 
 harmonic extensions of $u$ on balls $B(x,r)$ where $x \in D, r \le \rho_0(x), z \in B(x,r)$ and we have 
\begin{equation*}
u(x)=\int_{\partial B(x,r)} u(y)\:d\mu_{x,B(x,r)}(y) \:
\end{equation*}
(in the case where $r = \rho_0(x)$, we note that by Lemma \ref{zero_meas_set}, 
$u$ is defined almost everywhere on $\partial B(x,r)$ with respect to the harmonic measure on $\partial B(x,r)$, and so by the harmonic extension of $u$ on $B(x,r)$ we mean the Poisson integral of $u$ on $B(x,r)$).

\medskip

For $z \in D$, under the hypothesis of Theorem \ref{domain_radius}, define $\rho_0$ by,
\begin{equation} \label{rho_0}
\rho_0(z) = \begin{cases} 
       \kappa \: d(z,\partial D) & \text{ if } d(z,\partial D) \ge (1-\tau)r_D \\
      \min\{\kappa \: d(z,\partial D) \:,\:\delta(r(z),\varepsilon(d(z,\partial D));u)\} & \text{ if } d(z,\partial D) < (1-\tau)r_D \:,
   \end{cases}
\end{equation}
and let $\mathscr{F}_z$ be the collection of harmonic extensions of $u$ on balls $B(x,r)$ where $x \in D, r \le \rho_0(x), z \in B(x,r)$ and 
\begin{equation*}
u(x)=\int_{\partial B(x,r)} u(y)\:d\mu_{x,B(x,r)}(y) \:.
\end{equation*}

\medskip

Finally for $z \in M$, under the hypothesis of Theorem \ref{neg_bdry_bhv}, we define $\mathscr{F}_z$ to be the collection of
\begin{itemize}
\item harmonic extensions of $u$ on balls $B(x,r) \subset M$ where $z \in B(x,r)$ and 
\begin{equation*}
u(x)=\int_{\partial B(x,r)} u(y)\:d\mu_{x,B(x,r)}(y) \:,
\end{equation*}
\item harmonic extensions of $u$ (i.e. the Poisson integral of the $L^\infty$ function $u$ on the horospheres, which is well-defined by Lemma \ref{no_atom}) on horoballs that contain $z$,
\item the harmonic extension of $u$ (i.e. the Poisson integral of the $L^\infty$ function $u$ on $\partial M$) on $M$ \:.
\end{itemize}

\medskip

In the rest of this section $D$ will be used to denote both a bounded, pre-compact domain (as in Theorems \ref{domain_bdry_bhv}, \ref{domain_radius}) as well as a negatively curved Hadamard manifold (as in Theorem \ref{neg_bdry_bhv}) according to context, with more details specified when required. 

\medskip

We now define,
\begin{equation} \label{defn_v}
v(z):= \sup_{h \in \mathscr{F}_z} h(z) \:, \text{ for all } z \in D.
\end{equation}

\medskip

Then note that, since $u$ satisfies the RMVP, for any $z \in D$ there exists $h \in \mathcal{F}_z$ such that $u(z) = h(z)$, and hence  
\begin{equation}
u(z) \le v(z) \:, \text{ for all } z \in D.
\end{equation}
In the rest of this section we will prove some important properties of $v$.
\begin{lemma} \label{sup_attained}
For all $z \in D$, there exists $h \in \mathscr{F}_z$ such that $v(z)=h(z)$. 
\end{lemma}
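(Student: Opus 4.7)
The plan is to take a maximizing sequence $h_n \in \mathscr{F}_z$ with $h_n(z) \to v(z)$, where each $h_n$ is the Poisson integral of $u$ on a domain $W_n$ containing $z$, and extract a limiting domain $W$ by Hausdorff compactness. Passing to subsequences, I may assume all $W_n$ are of the same type (all balls, all horoballs, or, in the Hadamard setting, all equal to $M$); the last possibility is trivial since $\mathscr{F}_z$ then contains a single member of this type that already realizes the sup.

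By compactness of closed subsets of $\overline{D}$ in the Hausdorff topology, I may further assume $\overline{W_n} \to \overline{W}$ and $\partial W_n \to \partial W$ in the Hausdorff topology on compacts in $\overline{D}$. The discussion in Section~2 on convergence of balls and horoballs then identifies the possible limits: $W$ is either a ball (possibly degenerate to a single point), a horoball, or all of $M$.

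The argument then splits into two cases according to the position of $z$ relative to $\overline{W}$. If $z$ lies in the interior of $W$, Lemma~\ref{poisson_conv_2} gives $h_n(z) = P_{W_n}[u](z) \to P_W[u](z)$, so $v(z) = P_W[u](z)$, and it remains to verify that $P_W[u] \in \mathscr{F}_z$. For horoballs and for the full manifold this is immediate from the definition. For a limiting ball $B(x,r)$, the required mean-value identity $u(x) = P_B[u](x)$ is obtained by passing to the limit in $u(x_n) = P_{B_n}[u](x_n)$, using continuity of $u$ together with the uniform-on-compacts convergence supplied by Lemma~\ref{poisson_conv_2}; the radius constraint $r \le \rho_0(x)$ follows from continuity of $\rho_0$ in the settings of Theorem~\ref{domain_bdry_bhv} and Theorem~\ref{neg_bdry_bhv}, and from upper semi-continuity of $\rho_0$ in the setting of Theorem~\ref{domain_radius}. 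If instead $z$ lies on $\partial W$, which includes the degenerate case in which $W$ shrinks to the point $z$, then the concentration-of-harmonic-measures results, namely Lemma~\ref{har_meas_domains} in the precompact case and Lemma~\ref{har_meas_compl_cone} in the Hadamard case, force $\mu_{z,W_n}$ to concentrate at $z$, so that $h_n(z) = \int u \, d\mu_{z,W_n} \to u(z)$. Hence $v(z) = u(z)$, and applying the RMVP at $z$ itself produces $h_0 \in \mathscr{F}_z$ with $h_0(z) = u(z) = v(z)$.

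The main obstacle will be verifying that the limiting domain $W$ really does belong to the class defining $\mathscr{F}_z$ whenever $z$ lies in its interior. The most delicate point is passing the radius constraint $r \le \rho_0(x)$ to the limit in the setting of Theorem~\ref{domain_radius}, where $\rho_0$ involves the modulus-of-continuity function $\delta(r,\varepsilon;u)$ which need not be continuous; monotonicity of $\delta$ in its first two arguments, combined with continuity of $r(\cdot)$ and of $\varepsilon$, is what supplies the necessary semi-continuity.
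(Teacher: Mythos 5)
Your approach is essentially the paper's: take a maximizing sequence $h_n \in \mathscr{F}_z$ realized on domains $W_n$, extract a Hausdorff-convergent subsequence, and split according to whether $z$ sits in the interior or on the boundary of the limit $W$, using Lemma~\ref{poisson_conv_2} in the interior case and concentration of harmonic measures (Lemmas~\ref{har_meas_domains}, \ref{har_meas_compl_cone}) in the boundary case. The paper organizes this as three cases (degenerate limit, interior, boundary) rather than your two, but the content is the same.

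Two points deserve comment. First, you are \emph{more} careful than the paper in one respect: you explicitly flag the need to check that the limiting Poisson integral $P_W[u]$ actually lies in $\mathscr{F}_z$ when $z$ is interior to $W$ — i.e.\ that the mean-value identity at the new center and the radius constraint $r \le \rho_0(x)$ survive the limit. The paper's own proof of case (ii) never addresses this, so your observation is a genuine improvement. Passing the center identity to the limit via continuity of $u$ plus the uniform-on-compacts version of Lemma~\ref{poisson_conv_2} is correct (since $x_n \to x$ and $x$ is interior to $W$). However, your justification of the radius constraint in the setting of Theorem~\ref{domain_radius} is not sound as stated. You invoke ``upper semi-continuity of $\rho_0$'' from the monotonicity of $\delta(r,\varepsilon;u)$, but $\delta$ is \emph{decreasing} in $r$ and \emph{increasing} in $\varepsilon$, so along a sequence $x_n \to x$ with $r(x_n) \searrow r(x)$ and $\varepsilon(d(x_n,\partial D)) \nearrow \varepsilon(d(x,\partial D))$ both effects push $\delta$ upward, and $\limsup_n \rho_0(x_n) \le \rho_0(x)$ does not follow from monotonicity alone. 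Some additional structure (e.g.\ a joint regularity hypothesis on $\delta$, or a reformulation of what must pass to the limit) is needed to close this; monotonicity plus continuity of $r(\cdot)$ and $\varepsilon(\cdot)$ is not by itself enough.

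Second, a small slip: you fold the degenerate case $W = \{z\}$ into the boundary case and appeal to the concentration lemmas, but Lemma~\ref{har_meas_domains} requires a nondegenerate limit domain with $C^2$ boundary near $\xi$ and diffeomorphisms $g_n \to \mathrm{id}$, which is not available when the radii shrink to zero. The degenerate case should instead be handled directly by continuity of $u$ (the diameters of $\partial W_n$ tend to $0$, so $\sup_{\partial W_n}|u - u(z)| \to 0$), as the paper does. This is easily repaired but should be separated out.
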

\begin{proof}
Choose and fix $z_0 \in D$. Let $\{h_n\}_{n=1}^\infty \in \mathscr{F}_{z_0}$ such that 
\begin{equation*}
h_n(z_0) \to v(z_0) \: \text{ as } n \to \infty \:.
\end{equation*}
For each $n \in \N$, let $W_n$ be the domain such that $h_n$ is the harmonic extension of $u$ on $W_n$ (i.e. the Poisson integrals of the $L^\infty$ function $u$ on $\partial W_n$).

\medskip

Passing to a subsequence, we may assume that $\overline{W_n} \to \overline{W}$ and $\partial W_n \to \partial W$ (for some domain $W$ such that $W \subset D$) in the Hausdorff topology on compacts in $\overline{D}$. As $z_0 \in W_n$ for all $n \in \N$, there are three cases: 
\begin{enumerate}
\item[(i)] $W=\{z_0\}$\:,
\item[(ii)] $W$ is non-degenerate and $z_0$ lies in the interior of $W$,
\item[(iii)] $W$ is non-degenerate and $z_0 \in \partial W \cap D$.
\end{enumerate}

\medskip

In case $(i)$, by continuity,
\begin{eqnarray*}
|h_n(z_0) - u(z_0)| &\le & \int_{\partial W_n} |u(y)-u(z_0)|\: d\mu_{z_0,W_n}(y) \\
& \le & \sup_{y \in \partial W_n}  |u(y)-u(z_0)| \:\: \to 0 \:\text{ as } n \to \infty\:.
\end{eqnarray*} 
Hence, 
\begin{equation*}
v(z_0)=u(z_0)=h(z_0) \:,
\end{equation*}
where $h$ is the harmonic extension of $u$ on the ball $B(z_0,\rho(z_0))$\:.

\medskip

For case $(ii)$, in the setting of Theorems \ref{domain_bdry_bhv} and \ref{neg_bdry_bhv}, the hypothesis of Lemma \ref{poisson_conv_2} are satisfied. The same is also true in the setting of Theorem \ref{domain_radius}, as by the condition (\ref{rho_0}), all the domains $W_n, W$ are contained in an open domain, say $B$ such that $\overline{B} \subset D$. Then since $u$ is continuous in $\overline{B}$, the hypotheses of Lemma \ref{poisson_conv_2} applied to the function $u$ on the domain $B$ are satisfied.

\medskip 

So now applying Lemma \ref{poisson_conv_2}, we get
\begin{equation*}
h_n(z_0) \to h(z_0)\:,
\end{equation*}
where $h$ is the harmonic extension of $u$ on the limiting domain $W$ (i.e. the Poisson integral of the $L^\infty$ function $u$ on $\partial W$).

\medskip

Finally for case $(iii)$, we note that in the setting of Theorems \ref{domain_bdry_bhv}, \ref{domain_radius}, $W$ is a ball in $D$ and in the setting of Theorem \ref{neg_bdry_bhv}, $W$ is either a ball or a horoball. Then by continuity, for a given $\varepsilon>0$, there exists $\delta >0$ such that 
\begin{equation*}
|u(y)-u(z_0)| < \varepsilon/2\:, \text{ for all }y \in B(z_0, \delta) \subset D.
\end{equation*}
In the setting of Theorems \ref{domain_bdry_bhv}, \ref{neg_bdry_bhv}, set $C:= \displaystyle\sup_{D}|u|$ and in the setting of Theorem \ref{domain_radius}, set $C:= \displaystyle\sup_{B}|u|$, where $B$ is a domain containing all $W_n$ and $W$ such that $\overline{B} \subset D$. Then by Lemma \ref{har_meas_domains}, we get for $n$ large,
\begin{eqnarray*}
|h_n(z_0) - u(z_0)| &\le & \int_{\partial W_n \cap B(z_0, \delta)} |u(y)-u(z_0)| \:d\mu_{z_0,W_n}(y) \\
& + & \int_{\partial W_n \setminus B(z_0, \delta)} |u(y)-u(z_0)| \:d\mu_{z_0,W_n}(y) \\
& < & \frac{\varepsilon}{2} + 2C\: \mu_{z_0,W_n}(\partial W_n \setminus B(z_0, \delta)) \\
& < & \varepsilon \:.
\end{eqnarray*} 
Hence, 
\begin{equation*}
v(z_0)=u(z_0)=h(z_0) \:,
\end{equation*}
where $h$ is the harmonic extension of $u$ on the ball $B(z_0,\rho(z_0))$\:.
\end{proof}

\begin{lemma}
\label{cont_and_subh}
$v$ is continuous and subharmonic in $D$.
\end{lemma}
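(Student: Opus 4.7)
The plan is to establish continuity and subharmonicity of $v$ separately, using at each point the extremizer $h_0 \in \mathscr{F}_{z_0}$ furnished by Lemma \ref{sup_attained}. Throughout, $W_0$ denotes the domain (a ball, horoball, or the whole manifold) on which $h_0$ is the harmonic extension of $u$.

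For \emph{lower semicontinuity} at $z_0$, observe that $W_0$ is open and contains $z_0$, so any sequence $z_n \to z_0$ satisfies $z_n \in W_0$ for large $n$; hence $h_0 \in \mathscr{F}_{z_n}$, giving $v(z_n) \geq h_0(z_n) \to h_0(z_0) = v(z_0)$ by continuity of the harmonic function $h_0$. For \emph{upper semicontinuity}, given $z_n \to z_0$ pick $h_n \in \mathscr{F}_{z_n}$ realizing $v(z_n) = h_n(z_n)$ on a domain $W_n$, extract a Hausdorff subsequential limit $\overline{W_n} \to \overline{W}$, $\partial W_n \to \partial W$, and split into the three cases mirroring Lemma \ref{sup_attained}: (i) $W = \{z_0\}$, (ii) $z_0$ lies in the interior of $W$, (iii) $z_0 \in \partial W \cap D$. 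In case (i), continuity of $u$ immediately gives $h_n(z_n) \to u(z_0) \leq v(z_0)$. In case (iii), concentration of harmonic measures (Lemma \ref{har_meas_domains} in the bounded-domain setting, Lemma \ref{har_meas_compl_cone} in the Hadamard setting) combined with continuity of $u$ at $z_0$ yields $h_n(z_n) \to u(z_0) \leq v(z_0)$, by the same estimate used in case (iii) of Lemma \ref{sup_attained}. Case (ii) is the delicate one, discussed in the last paragraph.

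For \emph{subharmonicity}, once continuity is known it suffices to verify a local sub-mean-value inequality on arbitrarily small spheres at each point. Given $z_0 \in D$, take $h_0$ on $W_0$ as above, and choose $r > 0$ small enough that $\overline{B(z_0,r)} \subset W_0$. Then $h_0 \in \mathscr{F}_z$ for every $z \in \overline{B(z_0,r)}$, so $v(z) \geq h_0(z)$ pointwise there; combined with the mean value property of the harmonic function $h_0$ on $B(z_0,r)$,
\begin{equation*}
v(z_0) = h_0(z_0) = \int_{\partial B(z_0, r)} h_0 \, d\mu_{z_0, B(z_0, r)} \leq \int_{\partial B(z_0, r)} v \, d\mu_{z_0, B(z_0, r)}.
\end{equation*}
This local sub-mean-value property on a basis of small balls, together with continuity, delivers subharmonicity of $v$ on $D$.

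The \emph{main obstacle} is case (ii) of upper semicontinuity. A naive attempt to show that the Poisson integral $P_W[u]$ on the Hausdorff limit $W$ lies in $\mathscr{F}_{z_0}$ requires verifying that the mean-value-at-center condition survives the limit, which in the setting of Theorem \ref{domain_radius} is entangled with the regularity of the radius function $\rho_0$. The cleaner route I would pursue bypasses this: Hausdorff convergence of balls (resp.\ horoballs) forces $z_0 \in W_n$ for all large $n$, so $h_n \in \mathscr{F}_{z_0}$ directly and $h_n(z_0) \leq v(z_0)$; Lemma \ref{poisson_conv_2} then yields $h_n \to P_W[u]$ uniformly on compact subsets of $W$, so $h_n(z_n) \to P_W[u](z_0)$, and passing to the limit in $h_n(z_0) \leq v(z_0)$ gives $\limsup_n v(z_n) = P_W[u](z_0) \leq v(z_0)$. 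In the setting of Theorem \ref{domain_radius}, where $u$ need not have boundary limits on $\partial D$, the hypothesis of Lemma \ref{poisson_conv_2} is arranged by applying it to $u$ restricted to a precompact subdomain $B \Subset D$ containing all the $W_n$ and $W$, exactly as in Lemma \ref{sup_attained}.
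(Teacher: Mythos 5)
Your proof is correct and follows the same overall structure as the paper's: lower semicontinuity via the extremizer $h_0$ and the pointwise bound $v \geq h_0$ on $W_0$, upper semicontinuity via a subsequential Hausdorff limit split into the same three cases as Lemma~\ref{sup_attained}, and subharmonicity via the sub-mean-value inequality on small balls inside $W_0$. The one place where you genuinely improve on the paper is case (ii). The paper passes to the Poisson integral $h = P_W[u]$ on the limit domain and writes $h_n(z_n) \to h(z_0) \leq v(z_0)$, leaving unjustified why $h(z_0) \leq v(z_0)$ -- implicitly this needs $h \in \mathscr{F}_{z_0}$, which in the setting of Theorem~\ref{domain_radius} would require the radius constraint $r \leq \rho_0(x)$ to survive the Hausdorff limit, i.e.\ some upper semicontinuity of $\rho_0$ that the paper never verifies. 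Your workaround neatly sidesteps this: since $z_0$ is interior to $W$, Hausdorff convergence puts $z_0 \in W_n$ for large $n$, so $h_n \in \mathscr{F}_{z_0}$ directly, $h_n(z_0) \leq v(z_0)$, and Lemma~\ref{poisson_conv_2} (uniform convergence on compacts) sends $h_n(z_0)$ and $h_n(z_n)$ to the same limit $P_W[u](z_0) \leq v(z_0)$. This removes any dependence on whether $P_W[u]$ itself lies in $\mathscr{F}_{z_0}$. One small slip: in case (iii) the relevant concentration-of-measure lemma is Lemma~\ref{har_meas_domains} even in the Hadamard setting, since $z_0$ is a finite boundary point of $W$; Lemma~\ref{har_meas_compl_cone} is reserved for limit points at infinity, which does not occur here.
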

\begin{proof}
First we show that $v$ is continuous. Choose and fix $z_0 \in D$. By Lemma \ref{sup_attained} there exists $h \in \mathscr{F}_{z_0}$ such that $v(z_0)=h(z_0)$. Let $W$ be the domain such that $h$ is the harmonic extension of $u$ on $W$ (i.e. the Poisson integral of the $L^\infty$ function $u$ on $\partial W$). Note that $h \in \mathcal{F}_z$ for all $z \in W$. Then by definition of $v$ (see (\ref{defn_v})), we have
\begin{equation} \label{v_bigger_h}
v(z) \ge h(z) \ \hbox{ for all } z \in W.
\end{equation}

\medskip

Hence by continuity of $h$,
\begin{equation} \label{liminf}
\liminf_{z \to z_0} v(z) \ge \liminf_{z \to z_0} h(z) =  h(z_0)=v(z_0) \:.
\end{equation} 

\medskip

Now for the limsup, we consider $\{z_n\}_{n=1}^\infty \subset D$ such that $z_n \to z_0$ as $n \to \infty$. Let $h_n \in \mathscr{F}_{z_n}$ be the harmonic extensions of $u$ on the domains $W_n$ (i.e. the Poisson integrals of the $L^\infty$ function $u$ on $\partial W_n$) such that $v(z_n)=h_n(z_n)$. Passing to a subsequence, we may assume that $\overline{W_n} \to \overline{W}$ and $\partial W_n \to \partial W$ (for some domain $W$ such that $W \subset D$) in the Hausdorff topology on compacts in $\overline{D}$. Again as in the proof of Lemma \ref{sup_attained}, we consider three cases. 

\medskip

If $W=\{z_0\}$, then as in the proof of Lemma \ref{sup_attained}, continuity of $u$ gives
\begin{equation*}
v(z_n) = h_n(z_n) \to u(z_0) \le v(z_0) \:.
\end{equation*}

\medskip

If $W$ is non-degenerate and $z_0$ lies in the interior of $W$, then for the harmonic extension of $u$ on $W$, say $h$ (i.e. the Poisson integral of the $L^\infty$ function $u$ on $\partial W$), we get, by Lemma \ref{poisson_conv_2},
\begin{equation*}
v(z_n) = h_n(z_n) \to h(z_0) \le v(z_0) \:.
\end{equation*}

\medskip

In the third case, when $W$ is non-degenerate and $z_0 \in \partial W \cap D$, then the same argument as in the proof of Lemma \ref{sup_attained} using concentration of measure gives
\begin{equation*}
v(z_n) = h_n(z_n) \to u(z_0) \le v(z_0) \:.
\end{equation*}
Combining all the cases, we get
\begin{equation} \label{limsup}
\limsup_{z \to z_0} v(z) \le v(z_0) \:.
\end{equation}

\medskip

From (\ref{liminf}) and (\ref{limsup}), it follows that $v$ is continuous at $z_0$. As $z_0$ was arbitrarily chosen, $v$ is continuous in $D$.

\medskip

To see that $v$ is subharmonic in $D$, again choose and fix $z_0 \in D$. Let $h \in \mathscr{F}_{z_0}$ be the harmonic extension of $u$ on a domain $W$ (i.e. the Poisson integral of the $L^\infty$ function $u$ on $\partial W$) such that $v(z_0)=h(z_0)$. Then for all balls $B$ compactly contained in $W$, by (\ref{v_bigger_h}) we have
\begin{equation*}
v(z_0)=h(z_0) = \int_{\partial B} h(y) \:d\mu_{z_0,B}(y) \le \int_{\partial B} v(y) \:d\mu_{z_0,B}(y) \:. 
\end{equation*}
Since subharmonicity is a local property, it follows that $v$ is subharmonic in $D$.
\end{proof}
Our next Lemma shows that if $u$ has nice boundary behaviour then so does $v$.
\begin{lemma} \label{v_bdry_bhv}
Under the hypothesis of Theorem \ref{domain_bdry_bhv} or Theorem \ref{neg_bdry_bhv}, if $\xi \in \partial D$ such that 
\begin{equation*}
\displaystyle\lim_{\substack{z \to \xi\\ z \in D}} u(z) = u_\xi \:, \text{then }
\displaystyle\lim_{\substack{z \to \xi\\ z \in D}} v(z) = u_\xi \:.
\end{equation*}
\end{lemma}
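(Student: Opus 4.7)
The plan is to establish $\liminf_{z\to\xi,\, z \in D} v(z) \geq u_\xi$ and $\limsup_{z\to\xi,\, z\in D} v(z) \leq u_\xi$ separately. The first is immediate: since $v \geq u$ on $D$ and $u(z) \to u_\xi$ as $z \to \xi$ along $D$, we have $\liminf_{z\to\xi} v(z) \geq u_\xi$. All the substance lies in the reverse inequality.

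For the $\limsup$ bound, I would take an arbitrary sequence $z_n \to \xi$ in $D$. By Lemma~\ref{sup_attained}, pick $h_n \in \mathscr{F}_{z_n}$ realizing $v(z_n) = h_n(z_n)$, and let $W_n$ be the domain on which $h_n$ is the Poisson integral of $u$---a ball in the setting of Theorem~\ref{domain_bdry_bhv}, and a ball, horoball, or all of $M$ in the setting of Theorem~\ref{neg_bdry_bhv}. Passing to a subsequence, I may assume that $\overline{W_n} \to \overline{W}$ and $\partial W_n \to \partial W$ in the Hausdorff topology on compacts of $\overline{D}$ for some $W \subset \overline{D}$, with $\xi \in \overline{W}$ (since $z_n \in W_n$ and $z_n \to \xi$).

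I then split into two cases according to the limit $W$. If $W = \{\xi\}$, then $\partial W_n$ eventually lies in any preassigned neighborhood of $\xi$; the existence of the limit $u_\xi$ along $D$ forces $\sup_{y \in \partial W_n}|u(y) - u_\xi| \to 0$, and integrating against $\mu_{z_n, W_n}$ yields $|h_n(z_n) - u_\xi| \to 0$. If $W$ is non-degenerate, then since interior points of $W$ lie in $D$ (respectively in $M$) while $\xi$ lies on $\partial D$ (respectively $\partial M$), the point $\xi$ must sit on the portion of $\overline{W}$ meeting the boundary of the ambient compactification. I choose a neighborhood (a cone neighborhood, in the manifold setting) $U$ of $\xi$ in $\overline{D}$ so that $|u(y) - u_\xi| < \varepsilon$ for all $y \in D \cap U$, and split
$$
h_n(z_n) - u_\xi \;=\; \int_{\partial W_n \cap U} (u - u_\xi)\, d\mu_{z_n, W_n} \;+\; \int_{\partial W_n \setminus U}(u - u_\xi)\, d\mu_{z_n, W_n}.
$$
The first term is bounded in absolute value by $\varepsilon$, while the concentration of harmonic measure---Lemma~\ref{har_meas_domains} in the bounded setting, Lemma~\ref{har_meas_compl_cone} in the negatively curved setting---forces $\mu_{z_n, W_n}(\partial W_n \setminus U) \to 0$, so that the second term is at most $2\|u\|_\infty \mu_{z_n, W_n}(\partial W_n \setminus U) \to 0$. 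Thus $h_n(z_n) \to u_\xi$ in all cases, which yields the $\limsup$ bound.

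The main obstacle is matching the non-degenerate limit $W$ to the hypotheses of the concentration lemmas. In the bounded setting, the constraint $r_n \leq d(x_n, \partial D)$ in the definition of $\mathscr{F}_{z_n}$ passes to the limit to give $r = d(x, \partial D)$ and $\xi \in \partial W \cap \partial D$; one then produces local $C^2$ diffeomorphisms matching $\partial W_n$ to $\partial W$ near $\xi$ and invokes Lemma~\ref{har_meas_domains}. In the negatively curved setting, $\xi \in \partial M$ excludes the possibility that $W$ is a finite ball, so $W$ is either a horoball based at $\xi$ or all of $M$, and Lemma~\ref{har_meas_compl_cone} applies directly. A minor technicality is that $u_\xi$ is only defined on an a.e.\ set of $\xi$'s, but since we have chosen $\xi$ to lie in that set, the uniform bound $|u - u_\xi| < \varepsilon$ on $D \cap U$ for small $U$ follows directly from the definition of the limit along $D$.
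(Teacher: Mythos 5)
Your proof is correct and follows the same architecture as the paper's: take a sequence $z_n \to \xi$, extract extremizing domains $W_n$ via Lemma~\ref{sup_attained}, pass to a Hausdorff-convergent subsequence $\overline{W_n} \to \overline{W}$, split into the degenerate case $W=\{\xi\}$ and the non-degenerate case, and in the latter combine a small-neighborhood estimate with concentration of harmonic measure (Lemma~\ref{har_meas_domains} or Lemma~\ref{har_meas_compl_cone}). The opening reduction to a $\limsup$ bound (with the $\liminf$ bound being trivial from $v\ge u$) is harmless but actually redundant: your subsequence argument, like the paper's, establishes $v(z_n)\to u_\xi$ outright.

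The one genuine deviation is in the non-degenerate case of the bounded-domain setting, where $\partial W_n$ may intersect $\partial D$ on a set of positive harmonic measure. The paper splits into two subcases: when $\mu_{z_n,W_n}(\partial W_n\cap\partial D)=0$ it integrates only over $\partial W_n\cap D$, and otherwise it introduces auxiliary balls $V_n$ with $\partial V_n\cap\partial D=\emptyset$ and uses Lemma~\ref{poisson_conv_2} to pass from $P_{W_n}[u](z_n)$ to $P_{V_n}[u](z_n)$. You instead integrate directly over all of $\partial W_n$, which requires observing two things you only gesture at: (a) by Lemma~\ref{zero_meas_set}, $\mu_{z_n,W_n}$ gives zero mass to $\partial W_n\cap\partial D\setminus A$ where $A\subset\partial D$ is the full-measure set of boundary limits, so the Poisson integral is well-defined; and (b) for $y\in A$ near $\xi$, the boundary value $u_y=\lim_{z\to y}u(z)$ is itself within $\varepsilon$ of $u_\xi$, since one can approach $y$ through $D\cap U$. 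With those two points spelled out your decomposition is sound and actually a bit more direct than the paper's, since it avoids the auxiliary-ball approximation step entirely. Your observations about identifying the non-degenerate limit $W$ (that $r=d(x,\partial D)$ and $\xi\in\partial W\cap\partial D$ in the bounded setting, and that $W$ must be a horoball or all of $M$ in the negatively curved setting since a finite ball has compact closure in $M$) are correct and match what is needed to invoke the concentration lemmas.
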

\begin{proof}
Let $\xi \in \partial D$ such that $u$ has the mentioned boundary behaviour at $\xi$. Let $\{z_n\}_{n=1}^\infty \subset D$ such that $z_n \to \xi$ as $n \to \infty$.  We will show that
\begin{equation} \label{aim}
v(z_n) \to u_{\xi} \text{ as } n \to \infty \:.
\end{equation}
This is seen as follows. By Lemma \ref{sup_attained}, there exist domains $W_n$ such that $W_n \subset D$ and the harmonic extensions $h_n \in \mathscr{F}_{z_n}$ of $u$ on $W_n$ (i.e. the Poisson integrals of the $L^\infty$ function $u$ on $\partial W_n$) satisfy $v(z_n)=h_n(z_n)$. We note that 
to show (\ref{aim}), it is enough to show that each subsequence of $\{v(z_n)\}_{n=1}^\infty$ has a further subsequence which converges to $u_{\xi}$\:. So let $\{v(z_{n_k})\}_{k=1}^\infty$ be a given subsequence, which after relabelling we write as $\{v(z_n)\}_{n=1}^\infty$\:.

\medskip
 
Now in the setting of Theorem \ref{domain_bdry_bhv}, there are two cases:
\begin{itemize}
\item[(i)] diameters of $W_n$ converge to $0$,
\item[(ii)] diameters of $W_n$ do not converge to $0$.
\end{itemize}

\medskip

In case $(i)$, the domains $W_n$ are balls and $\overline{W_n} \to \{\xi\}$ in the Hausdorff topology on compacts in $\overline{D}$. Now as the radii of these balls are converging to $0$, the second fundamental forms of the $\partial W_n$'s are going to $+\infty$, whereas the second fundamental form of $\partial D$ is bounded, hence for $n$ large, $\partial W_n$ can intersect $\partial D$ at most at one point (which has zero harmonic measure). Then by the boundary behaviour of $u$ at $\xi$, we have 
\begin{eqnarray*}
|v(z_n)- u_{\xi}|
& \le & \int_{\partial W_n} |u(y)-u_{\xi}| \:d\mu_{z_n,W_n}(y) \\
& \le & \sup_{y \in D \cap \overline{B(\xi, \epsilon_n)}} |u(y)-u_{\xi}| \: \to 0 \text{  as } n \to \infty \:.
\end{eqnarray*}
(where $\epsilon_n = d(z_n,\xi)+diam(W_n) \to 0$ as $n \to \infty$).

\medskip

In case $(ii)$ by passing to a subsequence, we may assume that $\overline{W_n} \to \overline{W}$ and $\partial W_n \to \partial W$ (for some non-degenerate domain $W$ such that $W \subset D$) in the Hausdorff topology on compacts in $\overline{D}$. Then using the boundary behaviour of $u$ at $\xi$, given $\varepsilon>0$, there exists $\delta >0$ such that 
\begin{equation} \label{v_bdry_bhv_eq1}
|u(y)-u_\xi| < \varepsilon/2 \:,\text{ for all } y \in D \cap B(\xi,\delta) \:.
\end{equation}
Set $C:= \displaystyle\sup_{D}|u|$. Now if for all $n$ large,
\begin{equation} \label{v_bdry_bhv_eq2}
\mu_{z_n,W_n}(\partial W_n \cap \partial D)=0\:,
\end{equation}
then for $n$ large we have, by (\ref{v_bdry_bhv_eq1}) and Lemma \ref{har_meas_domains},
\begin{eqnarray} \label{v_bdry_bhv_eq3}
|v(z_n) - u_{\xi}| & = & \left| \int_{\partial W_n \cap D} (u(y) - u_{\xi}) \  d\mu_{z_n, W_n}(y) \right| \nonumber\\
 & \le & 
\int_{\partial W_n \setminus B(\xi,\delta)} |u(y)-u_\xi|\:d\mu_{z_n,W_n}(y) \nonumber\\
& + &
\int_{\partial W_n \cap B(\xi,\delta) \cap D} |u(y)-u_\xi|\:d\mu_{z_n,W_n}(y) \nonumber\\
& \le & 2C \mu_{z_n,W_n}(\partial W_n \setminus B(\xi,\delta)) + \frac{\varepsilon}{2} \nonumber \\
&<& \varepsilon \:.  
\end{eqnarray}
On the other hand if (\ref{v_bdry_bhv_eq2}) is not true for $W_n$, then we can consider balls $V_n$ such that 
\begin{itemize}
\item $z_n \in V_n$,
\item $\partial V_n \cap \partial D = \emptyset$\:,
\item the Hausdorff distance between the closures $\overline{V_n}$ and $\overline{W_n}$, as well as the boundaries $\partial V_n$ and $\partial W_n$ are sufficiently small so that for $\tilde{h}_n$, the harmonic extensions of $u$ on $V_n$, one has by Lemma \ref{poisson_conv_2},
\begin{equation} \label{v_bdry_bhv_eq4}
|\tilde{h}_n(z_n)-h_n(z_n)| < \frac{1}{n} \:,
\end{equation}
\item $\overline{V_n} \to \overline{W}$ and $\partial V_n \to \partial W$ in the Hausdorff topology on compacts in $\overline{D}$. 
\end{itemize}
Then proceeding as in (\ref{v_bdry_bhv_eq3}) for the domains $V_n$, we get for $n$ large enough,
\begin{equation} \label{v_bdry_bhv_eq5}
|\tilde{h}_n(z_n)-u_{\xi}| < \varepsilon \:.
\end{equation}
Finally, combining (\ref{v_bdry_bhv_eq4}) and (\ref{v_bdry_bhv_eq5}) we have for $n$ large,
\begin{equation*}
|v(z_n)-u_{\xi}| \le |h_n(z_n) - \tilde{h}_n(z_n)| + |\tilde{h}_n(z_n)-u_{\xi}| < 2 \varepsilon \:.
\end{equation*}
This completes the proof of (\ref{aim}) in the setting of Theorem \ref{domain_bdry_bhv}.

\medskip

Then in the setting of Theorem \ref{neg_bdry_bhv}, there are again two cases:
\begin{itemize}
\item[(i)] diameters of $W_n$ are bounded,
\item[(ii)] diameters of $W_n$ are unbounded. 
\end{itemize}

\medskip

In case $(i)$, the domains $W_n$ are balls and as their diameters are uniformly bounded it follows that $\overline{W_n} \to \{\xi\}$
in the cone topology. Then using the boundary behaviour of $u$ at $\xi$ and proceeding as in case $(i)$ in the scenario of bounded domains, we get the result.

\medskip

Similarly, for case $(ii)$, using the boundary behaviour of $u$ at $\xi$, Lemma \ref{har_meas_compl_cone}, Lemma \ref{poisson_conv_2} and proceeding as in case $(ii)$ in the scenario of bounded domains, we get the result.
\end{proof}

\begin{remark} \label{remark}
One may also define $w(z):= \displaystyle\inf_{h \in \mathscr{F}_z} h(z)$, for all $z \in D$. Proceeding similarly as above it can be shown that 
\begin{itemize}
\item for all $z \in D$, there exists $h \in \mathscr{F}_z$ such that $w(z)=h(z)$,
\item $w$ is continuous, superharmonic in $D$ satisfying $w(z) \le u(z)$, for all $z \in D$ and 
\item under the hypothesis of Theorem \ref{domain_bdry_bhv} or Theorem \ref{neg_bdry_bhv}, if $\xi \in \partial D$ such that 
\begin{equation*}
\displaystyle\lim_{\substack{z \to \xi\\ z \in D}} u(z) = u_\xi \:, \text{then }
\displaystyle\lim_{\substack{z \to \xi\\ z \in D}} w(z) = u_\xi \:.
\end{equation*}
\end{itemize}
This $w$ will be useful for us.
\end{remark}

\section{Some $L^\infty$ maximum principles}
In this section we prove some useful maximum principles. The first one is for pre-compact domains $D$.
\begin{lemma} \label{max_princ1}
Let $\varphi$ be a bounded, continuous, real-valued subharmonic function on $D$ such that  
\begin{equation*}
\displaystyle\lim_{\substack{z \to \xi\\ z \in D}} \varphi(z) = f(\xi) \:,
\end{equation*}
for almost every boundary point $\xi$, with respect to the harmonic measures on $\partial D$, where $f \in L^\infty(\partial D)$. Then 
\begin{equation*}
\varphi(x) \le P[f](x) \:,
\end{equation*}
for all $x \in D$, where $P[f]$ is the Poisson integral of $f$.
\end{lemma}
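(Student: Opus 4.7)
The plan is to express both sides of the desired inequality in terms of Brownian motion and appeal to optional stopping. Fix $x \in D$ and let $(B_t)_{t \geq 0}$ denote a Brownian motion on $M$ started at $x$, with $\tau$ the first exit time from $D$. Since $D$ is precompact, $\tau < \infty$ almost surely and the distribution of $B_\tau$ is precisely the harmonic measure $\mu_{x,D}$ on $\partial D$; in particular $P[f](x) = \mathbb{E}[f(B_\tau)]$. The goal thus reduces to showing $\varphi(x) \le \mathbb{E}[f(B_\tau)]$.

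The key point is that continuity and subharmonicity of $\varphi$ on $D$ make $\varphi(B_t)$ a submartingale while the path stays well inside $D$. To make this precise I would choose a smooth exhaustion $D_n \uparrow D$ with $\overline{D_n} \subset D_{n+1}$, and let $\tau_n$ be the first exit time of $B_t$ from $D_n$, so that $\tau_n \uparrow \tau$ almost surely. The stopped process $\varphi(B_{t \wedge \tau_n})$ is then a bounded submartingale; this follows from the strong Markov property applied over small balls, using the mean-value sub-inequality $\varphi(z) \le \int_{\partial B} \varphi \, d\mu_{z,B}$ which characterises subharmonicity and avoids any need for $C^2$ regularity of $\varphi$. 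Optional stopping yields $\varphi(x) \le \mathbb{E}[\varphi(B_{\tau_n})]$ for every $n$.

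To let $n \to \infty$, note that $B_{\tau_n} \to B_\tau \in \partial D$ almost surely along Brownian paths. Let $E \subset \partial D$ denote the full harmonic measure set on which the boundary limit $f$ is defined. Since $B_\tau$ has distribution $\mu_{x,D}$, we have $B_\tau \in E$ almost surely, and the boundary-limit assumption gives $\varphi(B_{\tau_n}) \to f(B_\tau)$ almost surely. Boundedness of $\varphi$ permits the dominated convergence theorem on the sample space of Brownian paths, yielding $\mathbb{E}[\varphi(B_{\tau_n})] \to \mathbb{E}[f(B_\tau)] = P[f](x)$, and hence $\varphi(x) \le P[f](x)$.

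The main technical obstacle is justifying the submartingale property of $\varphi(B_{t \wedge \tau_n})$ when $\varphi$ is only continuous; one cannot appeal directly to It\^o's formula, but the sub-mean-value inequality combined with the strong Markov property is the standard workaround. All remaining steps — the a.s.\ convergence $B_{\tau_n} \to B_\tau$, the identification of the law of $B_\tau$ with $\mu_{x,D}$, and the dominated convergence — are routine, which is why Brownian motion provides the cleanest route to this almost-everywhere boundary-value form of the maximum principle.
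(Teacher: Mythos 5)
Your proof is correct and follows essentially the same route as the paper's (which proves Lemma 5.2 and declares the proof of Lemma 5.1 "similar"): Brownian motion started at $x$, stopped along an increasing sequence of subdomains, the sub-mean-value inequality to get $\varphi(x)\le\mathbb{E}[\varphi(B_{\tau_n})]$, a.s.\ convergence $B_{\tau_n}\to B_\tau$ with $B_\tau$ landing in the full-measure set where $f$ is the boundary limit, and dominated convergence. The only cosmetic difference is that you phrase the key inequality via the submartingale/optional-stopping language and use a general smooth exhaustion $D_n\uparrow D$, whereas the paper (in the $M$-case) exhausts by geodesic balls and invokes subharmonicity directly as the statement $\varphi(x)\le\mathbb{E}[\varphi(B_{\tau_R})]$; these are the same fact.
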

We also have an analogue of \ref{max_princ1} for manifolds with pinched negative curvature.
\begin{lemma} \label{max_princ2}
Let $\varphi$ be a bounded, continuous, real-valued subharmonic function on $M$ (where $M$ is as in Theorem \ref{neg_bdry_bhv}) such that  
\begin{equation} \label{bdry_bhv_max_princ}
\displaystyle\lim_{\substack{z \to \xi\\ z \in M}} \varphi(z) = f(\xi) \:,
\end{equation}
for almost every $\xi \in \partial M$, with respect to the harmonic measures on $\partial M$, where $f \in L^\infty(\partial M)$. Then 
\begin{equation} \label{max_princ_ineq}
\varphi(x) \le P[f](x) \:,
\end{equation}
for all $x \in M$, where $P[f]$ is the Poisson integral of $f$.
\end{lemma}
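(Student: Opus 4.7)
The plan is to reduce this to the precompact-domain maximum principle already stated as Lemma \ref{max_princ1}, by exhausting $M$ by concentric geodesic balls and then passing to the limit via Lemma \ref{poisson_conv_2}.

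Fix an arbitrary $x_0 \in M$ and let $o \in M$ be the basepoint used in defining the compactification $\overline{M}$. For $R > d(o,x_0)$, set $B_R := B(o,R)$; since $M$ is a Hadamard manifold the exponential map at $o$ is a diffeomorphism, so $B_R$ is a precompact domain in $M$ with smooth boundary and $x_0 \in B_R$. Because $\varphi$ is continuous on all of $M$, its restriction to $\overline{B_R}$ is continuous, so $\lim_{z \to \xi,\, z \in B_R} \varphi(z) = \varphi(\xi)$ for every $\xi \in \partial B_R$. Applying Lemma \ref{max_princ1} to the bounded, continuous, subharmonic function $\varphi|_{B_R}$ with boundary data $\varphi|_{\partial B_R}$ (which is continuous, hence in $L^\infty(\partial B_R)$) yields
\[
\varphi(x_0) \;\le\; \int_{\partial B_R} \varphi(\xi)\, d\mu_{x_0, B_R}(\xi) \;=\; P_{B_R}[\varphi](x_0).
\]

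Next I choose a sequence $R_n \to \infty$ and set $W_n := B_{R_n}$, with the aim of passing to the limit in the previous display using Lemma \ref{poisson_conv_2}. A routine verification shows $\overline{W_n} \to \overline{M}$ and $\partial W_n \to \partial M$ in the Hausdorff topology on compact subsets of $\overline{M}$: every $\xi \in \partial M$ is the endpoint of a geodesic ray from $o$, whose image at parameter $R_n$ lies on $\partial B_{R_n}$ and tends to $\xi$ in the cone topology; conversely, since $d(o,\cdot) \equiv R_n$ on $\partial B_{R_n}$, any accumulation point in $\overline{M}$ of points on the $\partial B_{R_n}$ must lie in $\partial M$. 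The boundary behaviour hypothesis (\ref{bdry_bhv_max_princ}) is exactly what Lemma \ref{poisson_conv_2} requires for $D = M$ in case (2), with $W_n$ balls and $W = M$. This gives
\[
P_{W_n}[\varphi](x_0) \;\longrightarrow\; P[f](x_0) \qquad \text{as } n \to \infty.
\]

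Combining the two displays along $R = R_n$ then forces $\varphi(x_0) \le P[f](x_0)$, which is the desired inequality (\ref{max_princ_ineq}); since $x_0 \in M$ was arbitrary, the lemma follows. The only mildly technical step is verifying the Hausdorff convergence of the compactified balls and their boundaries to $\overline{M}$ and $\partial M$ respectively, but this is a standard consequence of the structure of the cone compactification of a negatively curved Hadamard manifold described in section 2, so no serious obstacle arises; everything else is a direct appeal to the two previously established lemmas.
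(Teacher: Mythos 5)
Your proof is correct, and it reaches the same conclusion by the same underlying idea (exhaust $M$ by an increasing family of geodesic balls and pass to the limit), but the packaging differs from the paper's in a way worth noting. The paper works directly with Brownian motion started at $x$: it uses the probabilistic form of the sub-mean value inequality $\varphi(x) \le \E(\varphi(B_{\tau_R}))$, invokes Sullivan's theorem that $B_t \to B_\infty$ almost surely, observes that the boundary-limit hypothesis forces $\varphi(B_{\tau_R}) \to f(B_\infty)$ almost surely, and then applies dominated convergence on the path space. You instead delegate the entire ``pass to the limit'' step to Lemma \ref{poisson_conv_2} (with $D = M$, $W_n = B(o,R_n)$, $W = M$), which is a legitimate and clean reuse of machinery already established; the Hausdorff convergence $\overline{B(o,R_n)} \to \overline{M}$, $\partial B(o,R_n) \to \partial M$ that this requires is, as you say, routine.

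Two small remarks. First, your step 1 invokes Lemma \ref{max_princ1}, but you only use it in the trivial case where $\varphi$ extends continuously to $\overline{B_R}$; there it collapses to the classical maximum principle $\varphi \le P_{B_R}[\varphi|_{\partial B_R}]$ for a continuous subharmonic function on a precompact domain, which you could (and arguably should) cite directly. This matters mildly for the paper's logical architecture: the paper declares that Lemmas \ref{max_princ1} and \ref{max_princ2} have ``similar proofs'' and proves only \ref{max_princ2}, so phrasing the proof of \ref{max_princ2} as relying on \ref{max_princ1} creates an appearance of circularity that is easily avoided by replacing that appeal with the classical fact. Second, the paper centers its exhausting balls at the point $x$ being tested, while you center them at the basepoint $o$; this makes no difference to the argument.
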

The proofs of Lemma \ref{max_princ1} and \ref{max_princ2} are similar. We only give the proof of Lemma \ref{max_princ2}.
\begin{proof}[Proof of Lemma \ref{max_princ2}]

Let $x \in M$, and for $R > 0$ let $\tau_R$ denote the first exit time of Brownian motion started at $x$ from the ball $B(x, R)$. Since Brownian paths are continuous, it follows that
\begin{equation*}
\tau_R \to \infty \text{ almost surely as } R \to \infty.
\end{equation*}

\medskip

Recalling Sullivan's result (\cite{Sullivan}) that almost surely the Brownian motion $(B_t)_{t \geq 0}$ converges to a point $B_{\infty}$ on the boundary 
$\partial M$, it follows that $B_{\tau_R} \to B_{\infty}$ almost surely as $R \to \infty$.  

\medskip

Let $A \subset \partial M$ be a set of full harmonic measure, $\mu_x(A) = 1$, 
such that $\phi(z) \to f(\xi)$ as $z \to \xi$ for all $\xi \in A$. Since the harmonic measure $\mu_x$ on $\partial M$ is the distribution of the random variable $B_{\infty}$, it follows that $B_{\infty} \in A$ almost surely. 
Hence almost surely we have $\phi(B_{\tau_R}) \to f(B_{\infty})$ as $R \to \infty$. 

\medskip

Since $\phi$ is bounded, the random variables $\{ \phi(B_{\tau_R}) \}_{R > 0}$ are uniformly bounded, thus by the Dominated convergence Theorem we have

\begin{equation} \label{conv_of_expectations}
\E(\varphi(B_{\tau_R})) \to \E(f(B_\infty)) \text{ as } R \to \infty.
\end{equation}



\medskip

On the other hand, since $\varphi$ is subharmonic in $M$, we have the sub-mean value property for geodesic spheres, hence 
\begin{equation} \label{smvp_expectation}
\varphi(x) \le \E(\varphi(B_{\tau_R}))\:, \text{ for all } R>0\:. 
\end{equation}

\medskip

It follows from (\ref{conv_of_expectations}) and (\ref{smvp_expectation}) that

\begin{equation*}
\varphi(x) \le \E(f(B_\infty)) = \int_{\partial M} f(\xi) d \mu_o(\xi) = P[f](x) \:
\end{equation*}

as required. 

%
%
%
%

%
\end{proof}

\section{Proofs of the main results} 
\begin{proof}[Proof of Theorem \ref{domain_bdry_bhv}]
By Lemma \ref{v_bdry_bhv},
\begin{equation*}
\displaystyle\lim_{\substack{z \to \xi\\ z \in D}} v(z) = u_\xi \:, \text{ for almost every } \xi \in \partial D\:.
\end{equation*}
By remark \ref{remark} we also have,
\begin{equation*}
\displaystyle\lim_{\substack{z \to \xi\\ z \in D}} w(z) = u_\xi \:, \text{ for almost every } \xi \in \partial D\:.
\end{equation*}

Let $\varphi:= v-w$. Then as by construction, $w \le u \le v$ and by the maximum principle and boundedness of $u$, both $v$ and $w$ are bounded, it follows that $\varphi$ is a bounded, non-negative, continuous, subharmonic function in $D$ such that 
\begin{equation*}
\displaystyle\lim_{\substack{z \to \xi\\ z \in D}} \varphi(z) = 0 \:, \text{ for almost every } \xi \in \partial D\:.
\end{equation*} 

Then by Lemma \ref{max_princ1},
\begin{equation*}
\varphi \le 0 \: \text{ on } D \:.
\end{equation*}

So $\varphi$ vanishes identically on $D$. Therefore $u,v$ and $w$ are identically equal to each other on $D$. Hence $u$ is both subharmonic and superharmonic, that is, harmonic in $D$.
\end{proof}

\begin{proof}[Proof of Theorem \ref{domain_radius}]
Let $R \in (\tau,1)$ and let $z \in D$ be such that
\begin{equation*}
d(z, \partial D) \le (1-(R + \kappa(1-R)))r_D \:.
\end{equation*}

Then by Lemma \ref{sup_attained}, there exists a ball $B(x,r) \subset D$ such that the harmonic extension $h \in \mathscr{F}_z$ of $u$ on $B(x,r)$  satisfies $v(z)=h(z)$. Moreover by (\ref{rho_0}),
\begin{eqnarray*}
d(x, \partial D) &\le & d(z, \partial D) + d(z, x) \\
& \le & (1-(R + \kappa \:(1-R)))r_D + \rho_0(x) \\
& \le & (1-(R + \kappa \:(1-R)))r_D + \kappa \:d(x, \partial D) 
\end{eqnarray*}

Then an elementary computation along with the fact that $\kappa \in (0,1)$ implies that
\begin{equation} \label{domain_radius_eq1}
d(x, \partial D) \le (1-R)r_D \:.
\end{equation}

Now as $R \in (\tau,1)$, from above it follows that
\begin{equation*}
d(x, \partial D) \le (1-R)r_D < (1-\tau)r_D\:, 
\end{equation*}
and hence by (\ref{rho_0}) we also have 
\begin{equation} \label{domain_radius_eq2}
d(z,x) < r \le \rho_0(x) \le \delta(r(x), \varepsilon(d(x, \partial D));u)\:.
\end{equation}

Then using (\ref{domain_radius_eq1}) and (\ref{domain_radius_eq2}) we get,
\begin{eqnarray*}
0 & \le & v(z) - u(z) \\
& \le & |h(z) - u(x)| + | u(x) - u(z)| \\
& \le & \int_{\partial B(x,r)} |u(y)- u(x)| \:d\mu_{z,B(x,r)}(y)  + | u(x) - u(z)| \\
& \le & \sup_{y \in \partial B(x,r)} |u(y)- u(x)|   + | u(x) - u(z)| \\
& \le & 2 \:\varepsilon(d(x, \partial D)) \\
& \le & 2 \sup_{t \in (0,(1-R)r_D]} \varepsilon(t) \:\to 0 \:,\text{ as } R \to 1 \:.
\end{eqnarray*}

Thus 
\begin{equation*}
(v-u)(z) \to 0 \text{ uniformly as }d(z, \partial D) \to 0\:.
\end{equation*}
Using remark \ref{remark} and proceeding as above we also have,
\begin{equation*}
(u-w)(z) \to 0 \text{ uniformly as }d(z, \partial D) \to 0\:.
\end{equation*}
Thus $\varphi:=v-w$ is a non-negative, 
continuous subharmonic function in $D$ such that 
\begin{equation*}
\varphi(z) \to 0 \text{ uniformly as }d(z, \partial D) \to 0\:.
\end{equation*}
This implies that $\varphi$ is also bounded in $D$. Then by Lemma \ref{max_princ1},
\begin{equation*}
\varphi \le 0 \text{ on } D\:.
\end{equation*}
The remainder of the proof is as in the final paragraph of the proof of Theorem \ref{domain_bdry_bhv}.
\end{proof}

\begin{proof}[Proof of Theorem \ref{neg_bdry_bhv}]
By Lemma \ref{v_bdry_bhv},
\begin{equation*} 
\displaystyle\lim_{\substack{z \to \xi\\ z \in M}} v(z) = u_\xi \:\: \text{for almost every }\xi \in \partial M\:.
\end{equation*}

Then using remark \ref{remark} and proceeding as in the proof of Theorem \ref{domain_bdry_bhv}, we get that $\varphi:= v-w$, is a bounded, non-negative, continuous, subharmonic function in $M$ such that
\begin{equation*} 
\displaystyle\lim_{\substack{z \to \xi\\ z \in M}} \varphi(z) = 0 \:\: \text{for almost every }\xi \in \partial M\:.
\end{equation*}
Then by Lemma \ref{max_princ2},
\begin{equation*}
\varphi \le 0 \text{ on } M \:.
\end{equation*}
The remainder of the proof is as in the final paragraph of the proof of Theorem \ref{domain_bdry_bhv}. 
\end{proof}

\section{Final remarks}

We conclude with some remarks and questions.

\begin{enumerate}

\item Functions satisfying the RMVP in a domain $D \subset M$ can be interpreted as functions which are harmonic with respect to a certain random walk $(X_n)_{n \geq 1}$ on $D$. Namely, the distribution of $X_{n+1}$ given that $X_n = z \in D$ is the harmonic measure $\mu_{z, B}$ on the sphere $\partial B$, where $B$ is the geodesic ball $B = B(z, \rho(z))$. Theorem \ref{domain_bdry_bhv} can then be interpreted as saying that continuous harmonic functions for the random walk with the appropriate boundary behaviour are also harmonic for the Laplace-Beltrami operator. 
   
\item Theorem \ref{neg_bdry_bhv} assumes that the manifold $M$ has strictly negative curvature. It would be interesting to study whether an analogue of this result holds for nonpositively curved manifolds, at least those which are Gromov hyperbolic. Such a result would cover the case of harmonic manifolds of purely exponential volume growth (which are nonpositively curved and Gromov hyperbolic), in particular it would cover all known examples of nonflat noncompact harmonic manifolds, namely the rank one symmetric spaces of noncompact type and the Damek-Ricci spaces.

\item Finally, for all the results obtained, it would be interesting to see if there are counterexamples if any of the hypotheses are relaxed. Constructing functions on a general Riemannian manifold which are not harmonic but satisfy the RMVP seems to be a difficult task however.   
\end{enumerate}

\section*{Acknowledgements} The authors  would  like to thank Swagato K. Ray for suggesting the problems. The second author is supported by a Research Fellowship of Indian Statistical Institute. 

\bibliographystyle{amsplain}

\end{document}